\newtheorem{theorem}{Theorem}[section]
\newtheorem*{claim}{Claim}
\newtheorem{corollary}[theorem]{Corollary}
\newtheorem{lemma}[theorem]{Lemma}
\newtheorem{proposition}[theorem]{Proposition}
\theoremstyle{definition}
\newtheorem{definition}[theorem]{Definition}
\newtheorem{example}[theorem]{Example}
\newtheorem{remark}[theorem]{Remark}
\sloppy \setcounter{tocdepth}{1}
\numberwithin{equation}{section}
\newcommand{\set}[1]{\left\{#1\right\}}
\renewcommand{\b}[1]{\mathbf{#1}}   
\renewcommand{\k}{\Bbbk}
\newcommand{\A}{\mathcal{A}}
\newcommand{\cJ}{\mathcal{J}}
\newcommand{\C}{\mathbb{C}}
\newcommand{\Q}{\mathbb{Q}}
\newcommand{\Z}{\mathbb{Z}}
\newcommand{\ab}{{\mathfrak{a}}}
\newcommand{\hh}{{\mathfrak{h}}}
\newcommand{\sC}{{\sf{C}}}
\newcommand{\sK}{{\sf{K}}}
\newcommand{\sd}{{\sf{d}}}
\newcommand{\sE}{{\sf{E}}}
\newcommand{\sI}{{\sf{I}}}
\newcommand{\sJ}{{\sf{J}}}
\newcommand{\sA}{{\sf{A}}}
\newcommand{\bQ}{{\mathbf{Q}}}
\newcommand{\PS}{{P\!\varSigma}}
\DeclareMathOperator{\id}{id}
\DeclareMathOperator{\Aut}{Aut}
\DeclareMathOperator{\Tor}{Tor}
\DeclareMathOperator{\tc}{{\sf TC}}
\DeclareMathOperator{\cl}{\sf cl}
\DeclareMathOperator{\zcl}{\sf zcl}
\DeclareMathOperator{\secat}{secat}
\DeclareMathOperator{\rank}{rank}
\DeclareMathOperator{\inn}{in}
\DeclareMathOperator{\geomdim}{geom dim}
\DeclareMathOperator{\IA}{IA}
\DeclareMathOperator{\GL}{GL}
\begin{document}

\title[Cohomology of almost-direct products]
{Cohomology rings of almost-direct products of free groups}

\author[Daniel C. Cohen]{Daniel C. Cohen$^\dag$}
\address{Department of Mathematics, Louisiana State University,
Baton Rouge, LA 70803, USA}
\email{\href{mailto:cohen@math.lsu.edu}{cohen@math.lsu.edu}}
\urladdr{\href{http://www.math.lsu.edu/~cohen/}
{http://www.math.lsu.edu/\char'176cohen}}
\thanks{{$^\dag$}Partially supported 
by National Security Agency grant H98230-05-1-0055}

\subjclass[2000]{
20F28,
20F36,
20J06,
55M30
}

\keywords{almost-direct product, cohomology ring, topological complexity}

\begin{abstract}
An almost-direct product of free groups is an iterated semidirect product of finitely generated free groups in which the action of the constituent free groups on the homology of one another is trivial.  We determine the structure of the cohomology ring of such a group.  This is used to analyze the topological complexity of the associated Eilenberg-Mac\,Lane space.
\end{abstract}


\maketitle

\section{Almost direct products of free groups} \label{sec:intro}

If $G_1$ and $G_2$ are groups, and $\alpha\colon G_1\to \Aut(G_2)$ is a homomorphism from $G_1$ to the group of (right) automorphisms of $G_2$, the semidirect product $G=G_2 \rtimes_\alpha G_1$ is the set $G_2 \times G_1$ with group operation
$(g_2,g_1)\cdot (g_2',g_1')=(\alpha(g_1')(g_2) g_2',g_1g_1')$. There is a corresponding split, short exact sequence  
\[
\xymatrix{1 \ar@/{}/[r] & G_2 \ar@/{}/[r]_<<{\ \ \iota_2} & G \ar@/{}/[r]_<<{\ \ \ \pi} & G_1 \ar@/{}/[r] \ar@/{}_{.6pc}/[l]_{\iota_1}  & 1,}
\]
where $\iota_1(g_1)=(1,g_1)$, $\iota_2(g_2)=(g_2,1)$, and $\pi(g_2,g_1)=g_1$.  Identifying $G_1$ and $G_2$ with their images under $\iota_1$ and $\iota_2$, the group $G$ is generated by $G_1$ and $G_2$.  Furthermore, for $g_1\in G_1$ and $g_2\in G_2$, the relation $g_1^{-1} g_2^{} g_1^{}=\alpha(g_1^{})(g_2^{})$ holds in $G$. If $G_1$ and $G_2$ are free groups, these are the only relations in $G$.

An almost-direct product of free groups is an iterated semidirect product
\[
G=\rtimes_{i=1}^\ell F_{n_i}=F_{n_\ell} \rtimes_{\alpha_\ell}(F_{n_{\ell-1}}\rtimes_{\alpha_{\ell-1}}( \cdots 
\rtimes_{\alpha_3} (F_{n_2} \rtimes_{\alpha_2} F_{n_1})))
\]
of finitely generated free groups in which the action of the group $\rtimes_{i=1}^j F_{n_i}$ on $H_1(F_{n_k};\Z)$ is trivial for each $j$ and $k$, $1 \le j < k \le \ell$.  In other words, the automorphisms $\alpha_k\colon \rtimes_{i=1}^{k-1} F_{n_i} \to \Aut(F_{n_k})$ which determine the iterated semidirect product structure of $G$ are $\IA$-automorphisms, inducing the identity on the abelianization of $F_{n_k}$.  If $F_{n_i}$ is freely generated by $x_{i,p}$, $1\le p \le n_i$, the group $G$ is generated by these elements (for $1\le i \le \ell$), and has defining relations  
\begin{equation} \label{eq:rels}
x_{i,p}^{-1} x_{j,q}^{} x_{i,p}^{} = \alpha_j(x_{i,p}^{})(x_{j,q}^{}), \quad 1\le i<j\le \ell,\quad 1\le p \le n_i, \quad 1\le q \le n_j.
\end{equation}

\begin{example} \label{ex:braid1}
Perhaps the most famous example of an almost-direct product of free groups is the Artin pure braid group $P_\ell$, the fundamental group of the configuration space $F(\C,\ell)$ of $\ell$ ordered points in $\C$.  The almost-direct product structure of $P_\ell=F_{\ell-1} \rtimes_{\alpha_{\ell-1}} \rtimes \cdots \rtimes_{\alpha_2} \rtimes F_1$ is given by (the restriction of) the Artin representation, see, for instance, Birman \cite{Birman}.  This structure is in evidence in the standard presentation of $P_\ell$.  The pure braid group has generators $A_{i,j}$, $1\le i < j \le \ell$, where $F_k=\langle A_{1,k+1},\dots,A_{k,k+1}\rangle$.  The relations in $P_\ell$ are given by 
$A_{r,s}^{-1} A_{i,j}^{} A_{r,s}^{}= \alpha_{j-1}(A_{r,s}^{})(A_{i,j}^{})$, where
\[
\alpha_{j-1}(A_{r,s}^{})(A_{i,j}^{})=
\begin{cases}
A_{i,j}^{}&\text{if $i<r<s<j$ or $r<s<i<j$,}\\
A_{r,j}^{}A_{i,j}^{}A_{r,j}^{-1}&\text{if $r<s=i<j$,}\\
A_{r,j}^{}A_{s,j}^{}A_{i,j}^{}A_{s,j}^{-1}A_{r,j}^{-1}&\text{if $r=i<s<j$,}\\
[A_{r,j}^{},A_{s,j}^{}]A_{i,j}^{}[A_{r,j}^{},A_{s,j}^{}]^{-1}&\text{if $r<i<s<j$.}
\end{cases}
\]
and $[u,v]=uvu^{-1}v^{-1}$ denotes the commutator. 
In the notation established above, the free group $F_k$, $1\le k \le \ell-1$, in the almost-direct product decomposition of $P_\ell$ is generated by $x_{k,i}=A_{i,k+1}$, $1\le i \le k$.
\end{example}

Interest in braid groups and configuration spaces, and generalizations such as  
complements of fiber-type (or supersolvable) hyperplane arrangements and orbit configuration spaces 
has prompted a great deal of work on almost-direct products of free groups, and much is known about the structure of these groups.  For instance, the iterated semidirect product structure of $G=\rtimes_{i=1}^\ell F_{n_i}$ is used in \cite{CSchain} to construct a finite, free, length $\ell$ resolution of the integers over the group ring $\Z{G}$.  Consequently, an arbitrary iterated semidirect product of free groups $G$ is of type $FL$ and has cohomological dimension $\ell$.   In the case where $G$ is an almost-direct product, analysis of this resolution 
reveals that the integral (co)homology groups of $G$ are torsion free, and that the Hibert series of the cohomology ring is given by
\begin{equation} \label{eq:HS}
\hh(H^*(G),t) = \sum_{k=1}^\ell \dim_\Q H^k(G;\Q)\cdot t^k = \prod_{i=1}^\ell (1+n_i t).
\end{equation}

This result may also be obtained using a spectral sequence argument, see Falk and Randell \cite{FR1}.  Furthermore, the techniques of \cite{FR1} may be used to prove that an almost-direct product of free groups $G$ satisfies the famous LCS formula, first established for the pure braid group by Kohno \cite{Kohno}. Let $G_k$ be the $k$-th lower central series subgroup of $G$, defined inductively by $G_1=G$ and $G_{k+1}=[G_k,G]$ for $k \ge 1$.
If $\phi_k=\rank G_k/G_{k+1}$ denotes the rank of the $k$-th lower central series quotient, 
then, in $\Z[[t]]$, one has
\[
\hh(H^*(G),-t) = \prod_{i=1}^\ell (1-n_i t) = \prod_{k\ge 1} (1-t^k)^{\phi_k}.
\]
Additionally, the methods of \cite{FR1,FR2} may be applied to show that an almost-direct product of free groups $G$ is residually nilpotent without torsion, that is, $\cap_{k\ge 1} G_k=\set{1}$ and $G_k/G_{k+1}$ is torsion free for each $k$.  As shown by Paris \cite{paris}, it follows that $G$ is biorderable, and hence the group ring $\Z{G}$ has no zero divisors.

For certain almost-direct products of free groups, the structure of the cohomology ring is known.  In the case where $G$ is the upper triangular McCool group, a subgroup of the group of basis-conjugating automorphisms of the free group $F_n$, the cohomology ring was 
recently 
determined by Cohen, Pakianathan, Vershinin, and Wu \cite{cpvw}.   If $G$ is the fundamental group of the complement of a fiber-type hyperplane arrangement $\A$, the cohomology ring $H^*(G)$ is isomorphic to the well known Orlik-Solomon algebra of $\A$, so is determined by combinatorial aspects of $\A$.  See Orlik and Terao \cite{OT} as a general reference on arrangements.  In particular, the cohomology ring of the pure braid group may be described in this way, recovering a classical result of Arnold \cite{Ar} and Cohen \cite{Co}. 
For any fiber-type arrangement $\A$, Shelton and Yuzvinsky \cite{SY} show that the (rational) cohomology ring $H^*(G)$ is a Koszul algebra.

In this paper, we determine the structure of the cohomology ring of an arbitrary almost-direct product of free groups $G$.  Our results provide an algorithm which takes as input the presentation of $G$ with relations \eqref{eq:rels}, and yields an explicit description of $H^*(G) \cong \sE/\sJ$ as a quotient of the exterior algebra $\sE=\bigwedge H^1(G)$, see Theorem \ref{thm:cohomology}.  This description is used to show that $H^*(G)$ is Koszul for any almost-direct product $G$ in Theorem \ref{thm:koszul}.  As an application, in Theorem \ref{thm:tc}, we compute the topological complexity of the Eilenberg-Mac\,Lane space associated to the group $G \times \Z^m$ for any almost-direct product $G=\rtimes_{i=1}^\ell F_{n_i}$ satisfying $n_i\ge 2$ for each $i$. This homotopy type invariant, introduced by Farber \cite{Fa03}, is motivated by the motion planning problem from robotics.

Some of the results of this paper were announced in \cite{mfo}.

\section{Fox calculus} \label{sec:fox}
Let $G =\rtimes_{i=1}^\ell F_{n_i}$ be an almost-direct product of free groups.  It is not difficult to show that the relations \eqref{eq:rels} may be expressed as commutators.  Consequently, 
the abelianization $H_1(G;\Z)=G/[G,G]$ is free abelian of rank $N=\sum_{i=1}^\ell n_i$.
In this section, we use the Fox calculus to analyze the maps in low-dimensional integral homology and cohomology induced by the abelianization map $\ab\colon G \to G/[G,G] \cong \Z^N$. 

\begin{theorem} \label{thm:h2}
Let $G$ be an almost-direct product of free groups with abelianization $G/[G,G]=\Z^N$. 
For $i \le 2$, the map $\ab_*\colon H_i(G;\Z) \to H_i(\Z^N;\Z)$ in integral homology is injective, and the map $\ab^*\colon H^i(\Z^N;\Z) \to H^i(G;\Z)$ in integral cohomology is surjective.
\end{theorem}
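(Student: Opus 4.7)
My plan is to use the Fox calculus to construct an explicit chain map realizing $\ab_*$ in low degrees, from which injectivity can be read off.

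Starting from the Fox partial free resolution
\[
\cdots \to \Z[G]^r \xrightarrow{\partial_2} \Z[G]^N \xrightarrow{\partial_1} \Z[G] \xrightarrow{\varepsilon} \Z \to 0,
\]
where $r = \sum_{i<j} n_i n_j$ is the number of relations \eqref{eq:rels}, $\partial_1(e_{i,p}) = x_{i,p}-1$, and $\partial_2$ has matrix the Fox Jacobian, the first step is to observe that each relation in \eqref{eq:rels} can be rewritten as a commutator, since $\alpha_j$ is an $\IA$-automorphism. Consequently the augmented differential $\bar\partial_2 = \varepsilon(\partial_2)$ vanishes, which recovers $H_1(G;\Z) = \Z^N$ and $\ab_*$ an isomorphism in degree $1$.

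Next I would lift $\ab$ to a chain map $\phi_\bullet$ from the Fox resolution into the Koszul resolution of $\Z$ over $\Z[\Z^N]$, with $\phi_0,\phi_1$ the obvious maps. The Alexander matrix (the image of $\partial_2$ in $\Z[\Z^N]^N$) has each row annihilated by the Koszul differential $\partial^{\rm K}_1$, because the Fox fundamental identity $\sum (x_{i,p}-1)\,\partial r/\partial x_{i,p} = r - 1$ projects to $0$ in $\Z[\Z^N]$ (using $r \in [F,F]$). Since the Koszul complex is acyclic, each row lifts to an element of $\Z[\Z^N]\otimes \wedge^2\Z^N$, giving $\phi_2$. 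An explicit Fox-derivative computation on a relation of the form $r_{i,j,p,q} = x_{i,p}^{-1}x_{j,q}x_{i,p}c^{-1}x_{j,q}^{-1}$ with $c \in [F_{n_j}, F_{n_j}]$ yields
\[
\phi_2(r_{i,j,p,q}) = -t_{i,p}^{-1}\, e_{i,p}\wedge e_{j,q} - t_{j,q}\, \gamma_c,
\]
where $\gamma_c \in \Z[\Z^{n_j}]\otimes \wedge^2\Z^{n_j}$ is a lift of the Koszul $1$-cycle $\sum_{q'}\overline{\partial c/\partial x_{j,q'}}\, e_{j,q'}$. After augmenting, $\bar\phi_2(r_{i,j,p,q}) = -e_{i,p}\wedge e_{j,q} + \eta_{i,j,p,q}$ with $\eta_{i,j,p,q} \in \wedge^2\Z^{n_j}$.

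From this formula, injectivity is immediate. In the decomposition $\wedge^2\Z^N = \bigoplus_i \wedge^2\Z^{n_i} \oplus \bigoplus_{i<j}(\Z^{n_i}\otimes\Z^{n_j})$, the projection of $\bar\phi_2$ to the ``cross'' summand sends the basis $\{r_{i,j,p,q}\}$ of $\Z^r$ bijectively onto $\{-e_{i,p}\wedge e_{j,q}\}_{i<j}$, so $\bar\phi_2$ is injective. The chain-map identity $\bar\phi_2 \circ \bar\partial_3 = \bar\partial^{\rm K}_3 \circ \bar\phi_3 = 0$ then forces $\mathrm{im}\,\bar\partial_3 \subseteq \ker \bar\phi_2 = 0$, so $H_2(G;\Z) = \ker\bar\partial_2/\mathrm{im}\,\bar\partial_3 \cong \Z^r$ and $\ab_*$ coincides with the injective map $\bar\phi_2$. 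Its image has free abelian cokernel, so dualizing gives surjectivity of $\ab^*$ in degree $2$; the statements for $i \le 1$ are immediate.

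The main technical point is the calculation of $\phi_2(r_{i,j,p,q})$, in particular verifying that the correction term $\eta_{i,j,p,q}$ stays within the ``block-diagonal'' summand $\wedge^2\Z^{n_j}$ and does not contaminate the cross summand $\Z^{n_i}\otimes\Z^{n_j}$. This hinges on the fact that the corrective factor $c$ involves only the generators $x_{j,\bullet}$ of $F_{n_j}$, which is precisely the $\IA$ condition on $\alpha_j$.
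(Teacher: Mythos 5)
Your argument is correct, and its computational core is the same as the paper's: rewrite the relations as commutators using the $\IA$ condition (Proposition \ref{prop:presentation}), use Fox calculus to lift $\ab$ to a chain map into the Koszul complex, and observe that the image of each relator is a ``cross'' term $\pm e_{i,p}\wedge e_{j,q}$ plus a correction lying entirely in $\bigwedge^2$ of the $j$-th block because the correcting word involves only $x_{j,1},\dots,x_{j,n_j}$; your explicit formula for $\bar\phi_2$ agrees (up to sign and choice of relator) with Proposition \ref{prop:a2} and \eqref{eq:a2}, and your block-projection argument is exactly the triangular matrix \eqref{eq:a2matrix}. Where you genuinely diverge is in how $H_2(G;\Z)$ is identified: the paper invokes the minimal free resolution of \cite{CSchain}, all of whose boundary maps augment to zero, so that $H_i(G;\Z)=\Z\otimes_\Lambda\sC_i$ on the nose; you instead work only with the presentation-level Fox partial resolution, extend it arbitrarily to a full resolution, and then deduce $\mathrm{im}\,\bar\partial_3=0$ from the injectivity of $\bar\phi_2$ together with the vanishing of the augmented Koszul differential. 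This bootstrap is valid and makes the proof of this particular theorem more self-contained (no appeal to the minimality result of \cite{CSchain}), though it does not by itself give the global consequences the paper gets from that resolution (torsion-freeness of all $H_i(G;\Z)$ and the Betti numbers \eqref{eq:HS}, which are used later). One small point to keep explicit: injectivity of $\ab_*$ alone does not dualize to surjectivity over $\Z$; you need the cokernel to be free, which your observation that the projection to the cross summand carries the relator basis bijectively to basis elements does provide, since it exhibits $\bar\phi_2$ as a split injection --- the same identity-block phenomenon the paper relies on.
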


We first exhibit a presentation of $G$ that is particularly amenable to analysis by Fox calculus. 
Let $\IA_n$ denote the kernel of the natural map $\Aut(F_n) \to \GL(n,\Z)$ induced by the map of $F_n$ to its abelianization.  As shown by Magnus and Nielsen, the group $\IA_n$ of $\IA$-automorphisms of $F_n$ is generated by automorphisms $\beta_{i,j}$, $1\le i,j \le n$, $i\neq j$, and $\theta_{i;s,t}$, $1\le i,s,t\le n$, $i,s,t$ distinct, see \cite{mks}. 
If $F_n$ is generated by $y_1,\dots,y_n$, these automorphisms are given by
\begin{equation} \label{eq:IAgens}
\beta_{i,j}(y_k^{})=\begin{cases}
y_k^{} &\text{if $k\neq i$,}\\
y_j^{-1}y_i^{} y_j^{} &\text{if $k=i$,}
\end{cases}
\quad\text{and}\quad
\theta_{i;s,t}(y_k^{})=\begin{cases}
y_k^{} &\text{if $k\neq i$,}\\
y_i^{} [y_s^{},y_t^{}]&\text{if $k=i$.}
\end{cases}
\end{equation}

\begin{proposition} \label{prop:presentation}
Let $G=F_{n_\ell} \rtimes_{\alpha_\ell}\rtimes \cdots 
\rtimes_{\alpha_2} F_{n_1}$ be an almost-direct product of free groups.  Then $G$ admits a presentation with generators $x_{i,p}$, $1\le i \le \ell$, $1\le p \le n_i$, and relations
\[
x_{j,q}^{} x_{i,p}^{} = x_{i,p}^{} x_{j,q}^{} w_{p,q}^{i,j},\quad 1\le i < j \le \ell,
\quad 1\le p \le n_i, \quad 1\le q \le n_j,
\]
where $w_{i,j}^{p,q}$ is a word in the generators $x_{j,1},\dots,x_{j,n_j}$, and is an element of the commutator subgroup $[G,G]$ of $G$.
\end{proposition}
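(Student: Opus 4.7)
The plan is to start from the standard presentation of $G$ with generators $x_{i,p}$ and relations \eqref{eq:rels}, and then rewrite each relation in the desired form by exploiting the defining $\IA$-property of the twisting automorphisms $\alpha_j$. Everything reduces to a one-line manipulation once the $\IA$-condition is unpacked, so the argument will be quite short.

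First, I would fix indices $1\le i<j\le \ell$ and $1\le p\le n_i$, $1\le q\le n_j$, and examine the specific automorphism $\alpha_j(x_{i,p})\in\Aut(F_{n_j})$. Since by hypothesis the action of $\rtimes_{k=1}^{j-1}F_{n_k}$ on $H_1(F_{n_j};\Z)$ is trivial, $\alpha_j(x_{i,p})$ lies in $\IA_{n_j}$, the kernel of $\Aut(F_{n_j})\to\GL(n_j,\Z)$. In particular, it fixes the class of each generator $x_{j,q}$ modulo $[F_{n_j},F_{n_j}]$. Hence there is a word $w_{p,q}^{i,j}$ in $x_{j,1},\ldots,x_{j,n_j}$ such that
\[
\alpha_j(x_{i,p})(x_{j,q}) = x_{j,q}^{}\cdot w_{p,q}^{i,j},
\qquad w_{p,q}^{i,j}\in[F_{n_j},F_{n_j}].
\]

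Next, I would substitute this identity into the relation \eqref{eq:rels}. This yields $x_{i,p}^{-1}x_{j,q}^{}x_{i,p}^{}=x_{j,q}^{}w_{p,q}^{i,j}$, which rearranges to $x_{j,q}^{}x_{i,p}^{}=x_{i,p}^{}x_{j,q}^{}w_{p,q}^{i,j}$, precisely the form required. Since the replacement is a Tietze transformation of the defining relations, the resulting set of relations still presents $G$ together with the same generators. Finally, since $[F_{n_j},F_{n_j}]\subseteq[G,G]$, each $w_{p,q}^{i,j}$ belongs to the commutator subgroup $[G,G]$, completing the proof.

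There is no serious obstacle here: the entire content of the statement is the observation that the $\IA$-hypothesis forces $\alpha_j(x_{i,p})(x_{j,q})$ to differ from $x_{j,q}$ by a word lying in $[F_{n_j},F_{n_j}]$. The only thing to be careful with is the convention for the right action and the associated sign in the conjugation relation, so that the rewriting $x_{j,q}x_{i,p}=x_{i,p}\cdot\alpha_j(x_{i,p})(x_{j,q})$ is applied consistently.
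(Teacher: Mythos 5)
Your proof is correct, but it reaches the key point by a different and more elementary route than the paper. You use only the definition of $\IA_{n_j}$ as the kernel of $\Aut(F_{n_j})\to\GL(n_j,\Z)$: triviality of the induced action on $H_1(F_{n_j};\Z)$ immediately gives $\alpha_j(x_{i,p})(x_{j,q})\equiv x_{j,q} \pmod{[F_{n_j},F_{n_j}]}$, so $w_{i,j}^{p,q}=x_{j,q}^{-1}\,\alpha_j(x_{i,p})(x_{j,q})$ is a word in $x_{j,1},\dots,x_{j,n_j}$ lying in $[F_{n_j},F_{n_j}]\subseteq [G,G]$, and substituting into \eqref{eq:rels} yields the stated relations; your care about the direction of the conjugation convention is the only delicate point, and you handle it correctly. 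The paper instead invokes the Magnus--Nielsen generating set $\beta_{i,j}$, $\theta_{i;s,t}$ of $\IA_{n_j}$ from \eqref{eq:IAgens}, writes $\alpha_j(x_{i,p})=\psi_1^{\epsilon_1}\cdots\psi_m^{\epsilon_m}$ in those generators, and argues by induction on $m$ that the image of $x_{j,q}$ is $x_{j,q}$ times an explicit product of commutators of words in $x_{j,1},\dots,x_{j,n_j}$. For the proposition as stated your argument is entirely sufficient and shorter, requiring no generation theorem for $\IA_n$; what the paper's heavier argument buys is the explicit decomposition $w_{i,j}^{p,q}=\prod_k[u_k,v_k]$, which is precisely the input used later (the terms $\nabla^{\ab}(u_k)\nabla^{\ab}(v_k)$ in Proposition \ref{prop:a2}) and is what makes the overall description of $H^*(G)$ algorithmic from the presentation. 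Abstractly every element of $[F_{n_j},F_{n_j}]$ is such a product, so nothing is lost logically by your route; the difference is effectiveness, not validity.
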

\begin{proof}
The almost-direct of free groups  $G$ admits a presentation with generators $x_{i,p}$ and relations $x_{j,q}^{} x_{i,p}^{} = x_{i,p}^{} \alpha_j(x_{i,p}^{})(x_{j,q}^{})$, where $\alpha_j(x_{i,p}) \in \IA_{n_j}$, see \eqref{eq:rels}.  Thus, $\alpha_j(x_{i,p})=\psi_1^{\epsilon_1}\cdots\psi_m^{\epsilon_m}$, where each $\psi_k$, $1\le k \le m$, is one of the generators $\beta_{i,j}$ and $\theta_{i;s,t}$ of $\IA_{n_j}$ recorded in \eqref{eq:IAgens} above and $\epsilon_k\in\set{1,-1}$.  Clearly, $w_{i,j}^{p,q}=\alpha_j(x_{i,p}^{})(x_{j,q}^{})$ is a word in the generators $x_{j,1},\dots,x_{j,n_j}$.  Observing that
$\beta_{i,j}(y_i^{})=y_i^{}[y_i^{-1},y_j^{-1}]$, $\beta^{-1}_{i,j}(y_i^{})=y_i^{}[y_i^{-1},y_j^{}]$, and $\theta_{i;s,t}^{-1}(y^{}_i)=y_i^{}[y_t^{},y_s^{}]$, induction on $m$ shows that $w_{i,j}^{p,q}$ is a commutator.
\end{proof}

\begin{example} \label{ex:braid2}
In terms of the standard generators $A_{i,j}$ of the pure braid group $P_\ell$, the 
above result yields a presentation with relations
\[
A_{i,j}^{} A_{r,s}^{}=
\begin{cases}
A_{r,s}^{} A_{i,j}^{}&\text{if $i<r<s<j$ or $r<s<i<j$,}\\
A_{r,s}^{}A_{s,j}^{}[A_{s,j}^{-1},A_{r,j}^{}]&\text{if $r<s=i<j$,}\\
A_{r,s}^{}A_{r,j}^{}[A_{s,j}^{},A_{r,j}^{}]&\text{if $r=i<s<j$,}\\
A_{r,s}^{}A_{i,j}^{}[A_{i,j}^{-1},[A_{r,j}^{},A_{s,j}^{}]]&\text{if $r<i<s<j$.}
\end{cases}
\]
\end{example}

Let $F_N$ be the free group on generators $x_1,\dots,x_N$, with integral group ring $\Z{F}_N$.  The standard $\Z{F}_N$-resolution of $\Z$ is given by
\[
(\Z{F}_N)^N \xrightarrow{\ \partial_1\ } \Z{F}_N \xrightarrow{\ \epsilon\ } \Z,
\]
where $(\Z{F}_N)^N$ is a free $\Z{F}_N$-module of rank $n$ with basis $e_1,\dots,e_N$, $\partial_1(e_i)=x_i-1$, and $\epsilon(x_i)=1$.  The Fox calculus is based on the fact that the augmentation ideal $IF_N=\ker\epsilon$ is a free $\Z{F}_N$-module of rank $n$, generated by $\set{x_i-1 \mid 1\le i \le N}$.  In other words, for any $w \in \Z{F}_N$, there are unique elements $\frac{\partial{w}}{\partial{x_i}} \in \Z{F}_N$, the Fox derivatives of $w$, so that 
\begin{equation} \label{eq:FTFC}
w-\epsilon(w) = \sum_{i=1}^N \frac{\partial{w}}{\partial{x_i}}(x_i-1).
\end{equation}

Define the Fox gradient, the $\Z{F}_N$-linear homomorphism $\nabla\colon \Z{F}_N \to (\Z{F}_N)^N$, by
\[
\nabla(w)=\sum_{i=1}^N \frac{\partial{w}}{\partial{x_i}}e_i.
\]
Then, the ``fundamental formula of Fox calculus'' \eqref{eq:FTFC} 
reads 
$w-\epsilon(w)=\partial_1(\nabla(w))$.  This may be used to establish the ``product rule'' $\nabla(uv)=\nabla(u) \cdot\epsilon(v) + u\nabla(v)$.
In particular, if $z\in F_N$, then $\nabla(z^{-1})=-z^{-1}\nabla(z)$.

For a finitely presented group $G=F_N/R$, the Fox calculus may be used to obtain a partial resolution of $\Z$ as a (left) $\Z{G}$-module.  If $R$ is the normal closure of $\set{r_1,\dots,r_M}$ in $F_N$ and $\phi\colon F_N \twoheadrightarrow G$ is the natural projection, with extention $\tilde\phi\colon \Z{F}_N \to\Z{G}$ to group rings, this partial resolution is of the form
\begin{equation}\label{eq:res2}
(\Z{G})^M \xrightarrow{\ \partial_2^G\ } (\Z{G})^N \xrightarrow{\ \partial_1^G\ } \Z{G} 
\xrightarrow{\ \epsilon\ } \Z,
\end{equation}
where $\epsilon$ is the augmentation map, $\partial_1^G=\tilde\phi \circ \partial_1$, and (the matrix of) the map $\partial_2^G$ is given by the matrix of Fox derivatives
\begin{equation*} \label{eq:alex mat}
\left(\tilde\phi\left(\frac{\partial{r}_i}{\partial{x}_j}\right) \right).
\end{equation*}

Now let $G=\rtimes_{i=1}^\ell F_{n_i}$ be an almost-direct product of free groups, let $N=\sum_{i=1}^\ell n_i$, and denote the generators of $F_N$ by $x_{i,q}$, $1\le i \le \ell$, $1\le q \le n_i$, in accordance with the presentation provided by Proposition \ref{prop:presentation}.  A free $\Z{G}$-resolution of $\Z$,
\begin{equation}\label{eq:res}
C_\ell(G) \xrightarrow{\ \partial_\ell^G\ } 
C_{\ell-1}(G) \xrightarrow{\quad} \dots\dots \xrightarrow{\quad}
C_2(G) \xrightarrow{\ \partial_2^G\ } 
C_1(G) \xrightarrow{\ \partial_1^G\ } 
C_0(G) \xrightarrow{\ \epsilon\ } \Z,
\end{equation}
is constructed in \cite{CSchain}.  This resolution is minimal in the sense that $C_q(G)$ is a free (left) $\Z{G}$-module of rank equal to $b_q(G)$, the $q$-th Betti number of $G$.  
That is, 
the boundary maps of this resolution all augment to zero, $\epsilon\circ \partial_q^G = 0$, see \cite[Prop. 3.3]{CSchain}.  Using the construction of \cite{CSchain}, one can show that the truncation $(C_{\le 2}(G),\partial_{\le 2}^G)$ of this resolution coincides with the partial resolution \eqref{eq:res2} obtained by applying the Fox calculus to the presentation of Proposition \ref{prop:presentation} of the almost-direct product of free groups $G$.

The resolution \eqref{eq:res} may be realized as the augmented, cellular chain complex of the universal cover $\widetilde{X}_G$ of a CW-complex $X_G$ of type $K(G,1)$.  See \cite[\S1.3]{CSchain} for the construction of the complex $X_G$.  
As noted above, the abelianization of $G =\rtimes_{i=1}^\ell F_{n_i}$ is free abelian of rank $N=\sum_{i=1}^\ell n_i$.  Denote the generators of $G/[G,G]\cong \Z^N$ by $t_{i,j}$, $1 \le i \le \ell$, $1\le j \le n_i$. The group ring $\Z{\Z^N}$ may be identified with the ring $\Lambda=\Z[t_{i,j}^{\pm 1}]$ of Laurent polynomials.
Let $Y_G$ be the universal abelian cover of $X_G$, the covering corresponding to the abelianization map $\ab\colon G \to \Z^N$.  Denote the cellular chain complex of $Y_G$ by $(\sC_\bullet,\delta_\bullet)$, where $\sC_q = \Lambda \otimes_{\Z{G}} C_q(G)$ and $\delta_q = \id_\Lambda \otimes_{\Z{G}} \partial_q^G$.

Abelianization induces a chain map $\ab_\bullet \colon (\sC_\bullet,{\delta}_\bullet) \to (\sK_\bullet,\sd_\bullet)$, where the latter is the chain complex of the universal (abelian) cover of the $N$-dimensional torus, $(S^1)^{\times N}$, a $K(\Z^N,1)$-space.  Using the standard CW decomposition of the torus, the complex $(\sK_\bullet,\sd_\bullet)$ may be realized as the Koszul complex, with $\sK_1=\Lambda^N$ generated by $e_{i,j}$, 
$\sK_q=\Lambda^{\binom{N}{q}}$ generated by $e_{i_1,j_1}\cdots e_{i_q,j_q}$, and 
\[
\sd_q(e_{i_1,j_1}\cdots e_{i_q,j_q})=\sum_{p=1}^q (-1)^{p+q}(t_{i_p,j_p}-1)e_{i_1,j_1}\cdots 
e_{i_{p-1},j_{p-1}}\cdot e_{i_{p+1},j_{p+1}}
\cdots e_{i_q,j_q}.
\]

The maps $\ab_0\colon \sC_0 \to \sK_0$ and $\ab_1\colon \sC_1 \to \sK_1$ may be taken to be identity maps.  The chain group $\sC_2$ has basis in correspondence with the relations in the presentation of $G$ recorded in Proposition \ref{prop:presentation}.  Let ${\b{r}}_{i,j}^{p,q}$ be the basis element corresponding to the relation $x_{i,p}x_{j,q}=x_{j,q}x_{i,p}w_{i,j}^{p,q}$.  Recall that $w_{i,j}^{p,q}\in [G,G]$ is a commutator in the generators $x_{j,1},\dots,x_{j,n_j}$. We explicitly identify the map $\ab_2\colon \sC_2 \to \sK_2$ (up to chain equivalence).  For this, we use the abelianized Fox gradient, the $\Lambda$-linear homomorphism $\nabla^\ab =\tilde\ab \circ \nabla$.

\begin{proposition} \label{prop:a2}
Let $1\le i < j \le \ell$, $1\le p \le n_i$, and $1\le q \le n_j$.  If $w_{i,j}^{p,q}=
\prod_{k=1}^m [u_k, v_k]$, where $u_k$ and $v_k$ are words in the generators $x_{j,1},\dots,x_{j,n_j}$ of $G$, then
\[
\ab_2(\b{r}_{i,j}^{p,q}) = e_{i,p}e_{j,q}+t_{i,p}t_{j,q}\sum_{k=1}^m\nabla^\ab(u_{k}) \nabla^\ab(v_{k})
\]
\end{proposition}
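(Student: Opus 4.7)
The plan is to verify that the stated formula, together with $\ab_0 = \id$ and $\ab_1 = \id$, assembles into a chain map $\ab_\bullet\colon (\sC_\bullet,\delta_\bullet)\to (\sK_\bullet,\sd_\bullet)$ through degree two. Since $\sK_\bullet$ is a free $\Lambda$-resolution of $\Z$ while each $\sC_q$ is a free $\Lambda$-module, any chain map lifting $\id_\Z$ is unique up to chain homotopy, which is exactly the ``up to chain equivalence'' caveat in the statement. Using $\ab_1 = \id$, the required identity reduces to
\[
\sd_2\bigl(\ab_2(\b{r}_{i,j}^{p,q})\bigr) = \delta_2(\b{r}_{i,j}^{p,q}).
\]

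I would compute the right-hand side first. Because $(\sC_{\le 2},\delta_{\le 2})$ is obtained from the Fox calculus applied to the presentation of Proposition \ref{prop:presentation} and then tensored with $\Lambda$, we have $\delta_2(\b{r}_{i,j}^{p,q}) = \nabla^\ab(r)$, where $r = x_{j,q}\,x_{i,p}\,(w_{i,j}^{p,q})^{-1}\,x_{j,q}^{-1}\,x_{i,p}^{-1}$ is the relator for $x_{j,q}x_{i,p}=x_{i,p}x_{j,q}w_{i,j}^{p,q}$. Iterating the product rule $\nabla(AB)=\nabla(A)+A\nabla(B)$ and using $\nabla(z^{-1})=-z^{-1}\nabla(z)$, then collapsing under $\tilde\ab$ via $\tilde\ab(w_{i,j}^{p,q})=1$ (since $w_{i,j}^{p,q}\in[G,G]$), yields
\[
\delta_2(\b{r}_{i,j}^{p,q}) = (t_{j,q}-1)\,e_{i,p} - (t_{i,p}-1)\,e_{j,q} - t_{i,p}\,t_{j,q}\,\nabla^\ab(w_{i,j}^{p,q}).
\]
To handle $\nabla^\ab(w_{i,j}^{p,q})$, the product rule together with the fact that each $[u_k,v_k]$ abelianizes to $1$ gives $\nabla^\ab\bigl(\prod_k[u_k,v_k]\bigr)=\sum_k \nabla^\ab([u_k,v_k])$, and a direct computation on a single commutator produces
\[
\nabla^\ab([u,v]) = (1-\tilde\ab(v))\,\nabla^\ab(u) - (1-\tilde\ab(u))\,\nabla^\ab(v).
\]

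For the left-hand side, the formula defining $\sd_q$ in the excerpt implies the derivation-like identity $\sd_2(\alpha\wedge\beta)=\sd_1(\beta)\,\alpha-\sd_1(\alpha)\,\beta$ for $\alpha,\beta\in\sK_1$, which is easily verified from $\sd_2(e_ae_b)=(t_b-1)e_a-(t_a-1)e_b$ by $\Lambda$-bilinearity. The abelianized fundamental formula of Fox calculus gives $\sd_1(\nabla^\ab(z))=\tilde\ab(z)-1$, so combining these two facts yields $\sd_2(\nabla^\ab(u_k)\nabla^\ab(v_k))=-\nabla^\ab([u_k,v_k])$. Together with $\sd_2(e_{i,p}e_{j,q})=(t_{j,q}-1)e_{i,p}-(t_{i,p}-1)e_{j,q}$, this produces
\[
\sd_2\bigl(\ab_2(\b{r}_{i,j}^{p,q})\bigr) = (t_{j,q}-1)\,e_{i,p}-(t_{i,p}-1)\,e_{j,q} - t_{i,p}\,t_{j,q}\,\nabla^\ab(w_{i,j}^{p,q}),
\]
which matches $\delta_2(\b{r}_{i,j}^{p,q})$.

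The main obstacle is systematic sign bookkeeping: the five-factor relator $r$ produces five summands via the Fox product rule, and the Koszul differential in the excerpt uses the $(-1)^{p+q}$ convention rather than the standard odd-derivation one, so the derivation identity for $\sd_2$ on decomposable wedges appears with the opposite sign from what one might naively write. With that in hand, every other manipulation is routine, and the cancellation produced by $\tilde\ab(w_{i,j}^{p,q})=1$ is what makes the two computations align exactly.
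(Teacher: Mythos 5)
Your proposal is correct and follows exactly the paper's approach: the paper's proof consists of the single observation that it suffices to verify $\sd_2\circ\ab_2(\b{r}_{i,j}^{p,q})=\delta_2(\b{r}_{i,j}^{p,q})$, calling the verification ``an exercise using the Fox calculus,'' and your computation of both sides (with the correct signs from the relator $x_{j,q}x_{i,p}w^{-1}x_{j,q}^{-1}x_{i,p}^{-1}$, the identity $\nabla^\ab([u,v])=(1-\tilde\ab(v))\nabla^\ab(u)-(1-\tilde\ab(u))\nabla^\ab(v)$, and $\sd_2(\nabla^\ab(u)\nabla^\ab(v))=-\nabla^\ab([u,v])$) is precisely that exercise, carried out correctly. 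Your remark that uniqueness of chain maps lifting $\id_\Z$ up to homotopy accounts for the ``up to chain equivalence'' caveat is also an accurate reading of the paper's setup.
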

\begin{proof}
It suffices to check that $\sd_2\circ\ab_2(\b{r}_{i,j}^{p,q}) = \delta_2(\b{r}_{i,j}^{p,q})$.  This is an exercise using the Fox calculus.
\end{proof}

\begin{proof}[Proof of Theorem \ref{thm:h2}]
Abusing notation, let $\epsilon\colon \Lambda \to \Z$ denote the augmentation map, sending a Laurent polynomial to its evaluation at $1$,  $\epsilon(g)=\left. g \right|_{t_{i,j} \mapsto 1}$.  Since the 
boundary maps of the complexes $\sC_\bullet$ and $\sK_\bullet$ both augment to zero, $\epsilon \circ \delta_q=0$ and $\epsilon \circ \sd_q=0$, all homology groups $H_i(G;\Z)$ and $H_i(\Z^N;\Z)$ are torsion free, and the map $\ab_*$ in homology is given simply by $\ab_*=\epsilon \circ \ab_\bullet$.  It follows immediately that $\ab_*\colon H_i(G;\Z)=\Z\otimes_\Lambda \sC_i\to\Z\otimes_\Lambda \sK_i=H_i(\Z^N;\Z)$ is an isomorphism for $i=0,1$.

The bases $\set{\b{r}_{i,j}^{p,q}}$ and $\set{e_{i,p}e_{j,q}}$ for the chain groups $\sC_2$ and $\sK_2$ correspond to bases of the homology groups
$H_2(G;\Z)$ and $H_2(\Z^N;\Z)$, 
which we denote by the same symbols.
By Proposition \ref{prop:a2}, 
\[
\ab_*(\b{r}_{i,j}^{p,q})=\epsilon\circ\ab_2(\b{r}_{i,j}^{p,q}) = e_{i,p}e_{j,q} + \sum_{k=1}^m\epsilon(\nabla^\ab(u_{k})) \epsilon(\nabla^\ab(v_{k})).
\]
Since $u_{k}$ and $v_{k}$ are words in the generators $x_{j,1},\dots,x_{j,n_j}$ of $F_{n_j}$ in the almost-direct product decomposition of $G$, we have 
$\nabla^\ab(u_{k})=\sum_{r=1}^{n_j} g_{k,r} e_{j,r}$
and 
$\nabla^\ab(v_{k})=\sum_{r=1}^{n_j} h_{k,r} e_{j,r}$
for some $g_{k,r},h_{k,r} \in \Lambda$.  It follows that
\begin{equation} \label{eq:a2}
\ab_*(\b{r}_{i,j}^{p,q})=e_{i,p}e_{j,q} +\sum_{1\le r<s\le n_j} c_{i,j}^{p,q,r,s} e_{j,r}e_{j,s}
\end{equation}
for some integers $c_{i,j}^{p,q,r,s}$. Order the bases of 
the homology groups $H_2(G;\Z)$ and $H_2(\Z^N;\Z)$ as follows:
\[
\begin{aligned}
H_2(G;\Z)\colon\quad&\set{\b{r}_{1,2}^{p,q}},\set{\b{r}_{1,3}^{p,q}},\set{\b{r}_{2,3}^{p,q}},\dots,\set{\b{r}_{\ell-1,\ell}^{p,q}},\quad\text{and} \\
H_2(\Z^N;\Z)\colon\quad&\set{e_{1,p}e_{1,q}}, \set{e_{1,p}e_{2,q}}, \set{e_{2,p}e_{2,q}},
\set{e_{1,p}e_{3,q}}, \set{e_{2,p}e_{3,q}}, \set{e_{3,p}e_{3,q}},\dots\dots,\\
&\qquad
\set{e_{1,p}e_{\ell,q}},\dots, \set{e_{\ell-1,p}e_{\ell,q}}, \set{e_{\ell,p}e_{\ell,q}},
\end{aligned}
\]
where each subset is ordered lexicographically (by $\set{p,q}$). With these choices, the matrix of the map $\ab_2\colon H_2(G;\Z) \to H_2(\Z^N;\Z)$ is of the form
\begin{equation} \label{eq:a2matrix}
A=
{\footnotesize{\left(
\begin{array}{cccccccccccc}
0 & I & C_{1,2} & 0 & 0 & 0 & \cdots\cdots\cdots & 0 & \cdots & 0 & 0 \\
0 & 0 & 0 & I & 0 & C_{1,3} & \cdots\cdots\cdots & 0 & \cdots & 0 & 0 \\
0 & 0 & 0 & 0 & I & C_{2,3} & \cdots\cdots\cdots & 0 & \cdots & 0 & 0 \\
\vdots &   &   &   &   &   & \ddots &  &  &  & \vdots \\
0 & 0 & 0 & 0 & 0 & 0 & \cdots\cdots\cdots & I & \cdots & 0 & C_{1,\ell} \\ 
\vdots &  &  &  &  &  &   &   & \ddots &  & \vdots \\ 
0 & 0 & 0 & 0 & 0 & 0 & \cdots\cdots\cdots & 0 & \cdots & I & C_{\ell-1,\ell} 
\end{array}
\right)}},
\end{equation}
where $I$ denotes an identity matrix of appropriate size, and the entries of $C_{i,j}$ are determined by \eqref{eq:a2}.  It follows that $\ab_2\colon H_2(G;\Z) \to H_2(\Z^N;\Z)$ is injective.

Passing to cohomology, the map $\ab^*$ from 
$H^i(\Z^N;\Z)=H_i(\Z^n;\Z)^*$ 
to
$H^i(G;\Z)=H_i(G;\Z)^*$ 
is the dual of $\ab_*\colon H_i(G;\Z) \to H_i(\Z^N;\Z)$, so is surjective for $i\le 2$.
\end{proof}

For brevity, denote the generators of $H^1(\Z^N;\Z) = H_1(\Z^N;\Z)^*$ by the same symbols. Then, the cohomology ring $H^*(\Z^N;\Z)$ is the exterior algebra over $\Z$ generated by $e_{i,p}$, $1\le i \le \ell$, $1\le p \le n_i$.  The proof of Theorem \ref{thm:h2} may be used to explicitly identify the kernel of the map $\ab^*\colon H^2(\Z^N;\Z)
\to H^2(G;\Z)$.  Let 
\[
B=
{\footnotesize{
\left(
\begin{array}{ccccc}
I & 0 & 0 & \cdots & 0\\
0 & K_{1,2} & 0 & \cdots & 0\\
0 & 0 & K_{1,3} & \cdots & 0\\
0 & 0 & K_{2,3} & \cdots & 0\\
0 & 0 & I & \cdots & 0 \\
\vdots & & & \ddots &\vdots \\
0 & 0 & 0 & \cdots & K_{1,\ell}\\
\vdots & \vdots & \vdots & &\vdots \\
0 & 0 & 0 & \cdots & K_{\ell-1,\ell}\\
0 & 0 & 0 & \cdots & I
\end{array}
\right)}}
\]
be the unique integral matrix satisfying $AB=0$, where $A$ is given by \eqref{eq:a2matrix}. 
Note that $K_{i,j}=-C_{i,j}$.  Define elements $\eta_j^{p,q} \in H^2(\Z^N;\Z)$, 
$1\le j \le \ell$, $1\le p<q\le n_j$, corresponding to the columns of $B$, 
\begin{equation} \label{eq:gb}
\eta_j^{p,q} = e_{j,p}e_{j,q}+\sum_{i=1}^{j-1} \sum_{r=1}^{n_i} \sum_{s=1}^{n_j} \kappa_{i,j}^{p,q,r,s} e_{i,r} e_{j,s},
\end{equation}
where the coefficients $\kappa_{i,j}^{p,q,r,s}$ are the entries of the matrices $K_{i,j}$, $1\le i \le j-1$.

\begin{corollary} \label{cor:basis}
The set 
\[
\mathcal{J} = \set{\eta_j^{p,q} \mid 1\le j \le \ell, 1\le p<q\le n_j}
\] 
is a basis for $\ker(\ab^*\colon H^2(\Z^N;\Z)\to H^2(G;\Z))$.
\end{corollary}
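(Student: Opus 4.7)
The plan is to combine three observations: each $\eta_j^{p,q}$ lies in $\ker \ab^*$; the cardinality of $\mathcal{J}$ equals the rank of $\ker \ab^*$; and the leading-term structure of the $\eta_j^{p,q}$ upgrades linear independence to a $\Z$-basis.

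For the first point, recall that in the proof of Theorem \ref{thm:h2} the map $\ab_2\colon H_2(G;\Z) \to H_2(\Z^N;\Z)$ was identified with the matrix $A$ of \eqref{eq:a2matrix}. Dually, $\ab^*$ is represented by the transpose of $A$, so an element $\omega \in H^2(\Z^N;\Z)$ lies in $\ker\ab^*$ precisely when its coordinate vector (in the basis $\{e_{a,p}e_{b,q}\}$) is annihilated by $A$. The matrix $B$ was defined so that $AB=0$, and by \eqref{eq:gb} the columns of $B$ are exactly the coordinate vectors of the $\eta_j^{p,q}$. Hence $\mathcal{J}\subset \ker \ab^*$.

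For the rank count, using the Hilbert series \eqref{eq:HS} one has $\rank H^2(G;\Z)=\sum_{i<j}n_i n_j$ and $\rank H^2(\Z^N;\Z)=\binom{N}{2}$ with $N=\sum_i n_i$. Since $\ab^*$ is surjective by Theorem \ref{thm:h2} and both groups are torsion-free, a direct expansion gives
\[
\rank\ker \ab^* \;=\; \binom{N}{2}-\sum_{i<j} n_i n_j \;=\; \sum_{j=1}^\ell \binom{n_j}{2} \;=\; |\mathcal{J}|.
\]

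For the third point, I would exploit the shape of \eqref{eq:gb}: the element $\eta_j^{p,q}$ contains $e_{j,p}e_{j,q}$ with coefficient $1$, and every other term has the form $e_{i,r}e_{j,s}$ with $i<j$. In particular no monomial $e_{j',r}e_{j',s}$ with $(j',r,s)\neq (j,p,q)$ appears in $\eta_j^{p,q}$, so the square submatrix of $B$ cut out by the rows indexed by $\{e_{j,r}e_{j,s} : 1\le j\le \ell,\ 1\le r<s\le n_j\}$ is the identity. Consequently $B$ realizes a split $\Z$-linear monomorphism $\Z^{|\mathcal{J}|}\hookrightarrow H^2(\Z^N;\Z)$ whose image is a direct summand of rank $|\mathcal{J}|$ contained in $\ker\ab^*$; by the rank count this summand is all of $\ker\ab^*$, so $\mathcal{J}$ is a $\Z$-basis.

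The main obstacle is the final step: the first two observations alone would only imply that $\mathcal{J}$ spans a finite-index sublattice of $\ker\ab^*$. The identity-block pattern of $B$---which is transparent from the construction \eqref{eq:gb} but would need to be made explicit---is what is needed to rule this out and obtain an actual basis over $\Z$.
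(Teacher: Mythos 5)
Your proposal is correct and is essentially the argument the paper leaves implicit: the paper likewise identifies $\ker(\ab^*)$ with the null space of the matrix $A$ of \eqref{eq:a2matrix}, and the corollary rests on $AB=0$ together with the identity blocks of $A$ and $B$ (which give both the injectivity of $\ab_*$, hence the rank of the kernel, and the fact that the columns $\eta_j^{p,q}$ of \eqref{eq:gb} span a direct summand). Your rank count via the Hilbert series \eqref{eq:HS} and the surjectivity in Theorem \ref{thm:h2} is just a cosmetic variant of reading the same count off the echelon structure of $A$, so there is no substantive difference.
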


\begin{example} \label{ex:braid3}
Let $G=P_\ell=F_{\ell-1} \rtimes_{\alpha_{\ell-1}} \rtimes \cdots \rtimes_{\alpha_2} \rtimes F_1$ be the pure braid group.  Let $N=\binom{\ell}{2}=\sum_{i=1}^{\ell-1} i$, and denote the generators of $H^1(\Z^N;\Z)$ by $e_{i,j}$, $1\le i < j \le \ell$. Using the presentation of $P_\ell$ from Example \ref{ex:braid2}, the above construction yields the basis 
\[
\set{e_{i,j}e_{i,k}-e_{i,j}e_{j,k}+e_{i,k}e_{j,k} \mid 1 \le i < j < k \le \ell}.
\]
for $\ker(\ab^*\colon H^2(\Z^N;\Z) \to H^2(P_\ell;\Z))$.
\end{example}

\section{Cohomology} \label{sec:cohomology}
In this section, we determine the structure of the 
cohomology ring of the almost-direct product of free groups $G=\rtimes_{i=1}^\ell F_{n_i}$.  Since $H^*(G;\Z)$ is torsion free, it suffices to analyze the rational cohomology ring $H^*(G)=H^*(G;\Q)$.  Let $\sE=H^*(\Z^N;\Q)$ be the exterior algebra over $\Q$, generated by $e_{i,p}$, $1\le i \le \ell$, $1\le p \le n_i$.  By Corollary \ref{cor:basis}, the set $\cJ$ is a basis for 
$\ker(\ab^*\colon \sE^2 \to H^2(G))$.  The main results of this section are the following.

\begin{theorem} \label{thm:cohomology}
Let $G=\rtimes_{i=1}^\ell F_{n_i}$ be an almost-direct product of free groups.  
The rational cohomology ring $H^*(G)$ is isomorphic to $\sE/\sJ$, where $\sE$ is the exterior algebra over $\Q$ generated by degree one elements $e_{i,p}$, $1\le i \le \ell$, $1\le p \le n_i$, and $\sJ$ is the homogeneous, two-sided ideal generated by the elements of the set $\cJ$.
\end{theorem}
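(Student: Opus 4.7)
The plan is to construct a surjective ring homomorphism $\phi\colon \sE/\sJ \to H^*(G;\Q)$ and then compare dimensions degree by degree, using the known Hilbert series \eqref{eq:HS}, to force $\phi$ to be an isomorphism. The map $\phi$ comes for free: the abelianization $\ab\colon G \to \Z^N$ induces a graded ring homomorphism $\ab^*\colon \sE = H^*(\Z^N;\Q) \to H^*(G;\Q)$, and Corollary \ref{cor:basis} places the generating set $\cJ$ inside $\ker(\ab^*)$ in degree two. Since $\ab^*$ is a ring map and $\sJ$ is the two-sided homogeneous ideal generated by $\cJ$, we have $\sJ \subset \ker(\ab^*)$, so $\ab^*$ descends to $\phi$.

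To establish surjectivity of $\phi$, I would observe that its image is precisely the subring of $H^*(G)$ generated by $\ab^*(\sE^1) = H^1(G)$ (Theorem \ref{thm:h2} in degree one is an isomorphism by the rank computation), and then argue that $H^*(G)$ is generated in degree one. I would do this by induction on $\ell$ via the Leray--Serre spectral sequence of $1 \to F_{n_\ell} \to G \to G' \to 1$ with $G' = \rtimes_{i=1}^{\ell-1} F_{n_i}$. The almost-direct hypothesis renders the coefficient system trivial, giving $E_2 \cong H^*(G';\Q)\otimes H^*(F_{n_\ell};\Q)$; matching total dimensions against \eqref{eq:HS} forces collapse at $E_2$; and since $H^*(F_{n_\ell})$ is concentrated in degrees $0$ and $1$, the multiplicative structure of the spectral sequence together with the inductive hypothesis on $G'$ yields the desired degree-one generation.

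For the upper bound I would show by straightening that $\sE/\sJ$ is spanned in degree $k$ by the monomials $e_{i_1,p_1}\cdots e_{i_k,p_k}$ with $i_1<\cdots<i_k$ (at most one factor per block). Assign to a monomial $m$ the vector $\mu(m) = (\nu_\ell,\ldots,\nu_1)$, where $\nu_j$ is the number of factors of $m$ lying in the $j$-th block, ordered lexicographically from the top index downward. If some $\nu_j \ge 2$, the relation \eqref{eq:gb} for $\eta_j^{p,q}$, combined with two-sidedness of $\sJ$, replaces an internal factor $e_{j,p}e_{j,q}$ by $-\sum_{i<j,r,s}\kappa_{i,j}^{p,q,r,s}\,e_{i,r}e_{j,s}$; in each resulting monomial $\nu_j$ decreases by one while only the $\nu_i$ for $i<j$ can grow, so $\mu$ strictly drops in the lex order. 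Since $\mu$ takes values in the well-ordered set $\{0,1,\ldots,k\}^\ell$, iteration terminates at a sum of normal-form monomials. The number of these in degree $k$ is $\sum_{|S|=k}\prod_{i\in S}n_i$, which by \eqref{eq:HS} is exactly $\dim_\Q H^k(G)$; surjectivity of $\phi$ plus this bound forces $\phi$ to be an isomorphism.

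The main obstacle is the degree-one generation step. The spectral sequence collapse is well known for almost-direct products (cf.\ \cite{FR1}), but upgrading the additive decomposition of $E_\infty$ to a multiplicative statement about $H^*(G)$ — specifically, lifting fiber classes in $E_\infty^{0,1}$ to $H^1(G)$ and checking that their products with pulled-back classes from $H^*(G')$ fill $H^*(G)$ — requires some care with the filtration. The straightening argument, by contrast, is routine bookkeeping once the lex measure $\mu$ is in hand.
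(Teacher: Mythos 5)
Your proposal is correct, and its skeleton --- descend $\ab^*$ to a surjection $\phi\colon\sE/\sJ\to H^*(G)$, prove degree-one generation of $H^*(G)$ by induction along $1\to F_{n_\ell}\to G\to G'\to 1$, then force an isomorphism by comparing dimensions against \eqref{eq:HS} --- is exactly the paper's; the two places where you deviate are implementation choices, and both are sound. For degree-one generation the paper does not run the Leray--Serre spectral sequence by hand: after checking that $\iota^*\colon H^1(G)\to H^1(F_{n_\ell})$ is surjective (immediate from the split extension and the triviality of the action on $H_1(F_{n_\ell})$), it invokes the Leray--Hirsch theorem, which is precisely the packaged form of the step you flag as your ``main obstacle'' (collapse, lifting fiber classes to $H^1(G)$, and checking that their products with classes pulled back from $H^*(G')$ exhaust $H^*(G)$); your collapse-by-dimension-count plus filtration argument does go through, but it amounts to reproving Leray--Hirsch, and you could shorten it by noting that collapse gives $E_\infty^{0,1}=E_2^{0,1}$, i.e.\ exactly the surjectivity hypothesis that theorem needs. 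For the dimension bound on $\sE/\sJ$, the paper proves that $\cJ$ is a quadratic Gr\"obner basis (Lemma \ref{lem:g-basis}) via the change of basis $e_\bQ\mapsto\xi_\bQ$, which yields an exact basis of $\sE/\sJ$ and hence its exact Hilbert series; your lex-measure straightening produces only a spanning set with at most one factor per block, but an upper bound on $\dim_\Q(\sE/\sJ)^k$ is all that Theorem \ref{thm:cohomology} requires once $\phi$ is surjective, so this is fine and somewhat more elementary. The payoff of the paper's heavier machinery is that the same Gr\"obner basis is reused immediately to prove Koszulity (Theorem \ref{thm:koszul}), which your straightening alone would not deliver.
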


Recall that a connected, graded algebra $\sA$ over a field $\k$ is said to be a Koszul algebra if $\Tor^{\sA}_{p,q}(\k,\k)=0$ for all $p\neq q$, where $p$ is the homological degree of the $\Tor$ groups, and $q$ is the internal degree coming from the grading of $\sA$.

\begin{theorem} \label{thm:koszul}
Let $G=\rtimes_{i=1}^\ell F_{n_i}$ be an almost-direct product of free groups.  
The rational cohomology ring $H^*(G)$ is a Koszul algebra.
\end{theorem}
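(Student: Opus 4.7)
The plan is to prove Koszulness by induction on $\ell$, the number of stages in the almost-direct product, adapting the strategy of Shelton and Yuzvinsky \cite{SY} for fiber-type arrangements. For the base case $\ell = 1$, $G = F_{n_1}$, so $H^*(G;\Q) = \Q \oplus \Q^{n_1}$ is concentrated in degrees $0$ and $1$ with vanishing products in positive degrees; this is a standard Koszul algebra, Koszul dual to the tensor algebra $T(\Q^{n_1})$.

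For the inductive step, I would decompose $G = F_{n_\ell} \rtimes G'$ with $G' = \rtimes_{i=1}^{\ell-1} F_{n_i}$, itself an almost-direct product for which $H^*(G')$ is Koszul by the inductive hypothesis. The almost-direct condition forces the action of $G'$ on $H_*(F_{n_\ell};\Q)$ to be trivial, so the Lyndon--Hochschild--Serre spectral sequence of $1 \to F_{n_\ell} \to G \to G' \to 1$ has $E_2^{p,q} = H^p(G';\Q) \otimes H^q(F_{n_\ell};\Q)$ and collapses for dimensional reasons, its $E_2$-page already having total $\Q$-dimension $\prod_{i=1}^\ell (1+n_i)$ equal to $\dim_\Q H^*(G)$ by \eqref{eq:HS}. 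This endows $H^*(G)$ with a filtration whose associated graded is $H^*(G') \otimes H^*(F_{n_\ell})$ as an $H^*(G')$-module, so $H^*(G)$ is free over $H^*(G')$ of rank $1+n_\ell$.

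Next I would use Theorem \ref{thm:cohomology} to pin down the extension algebraically: the quadratic subalgebra of $\sE/\sJ$ generated by $\set{e_{i,p} : i < \ell}$ is precisely $H^*(G')$, since the $\eta_j^{p,q}$ with $j<\ell$ already lie in the corresponding exterior subalgebra $\sE'$ and match the defining relations of $H^*(G')$ supplied by Theorem \ref{thm:cohomology} applied to $G'$. Dually, the quotient $H^*(G)/(H^*(G')^+ \cdot H^*(G))$ is generated by the classes of $e_{\ell,p}$, and each $\eta_\ell^{p,q}$ reduces in the quotient to $e_{\ell,p}e_{\ell,q}$ because its correction terms each contain a factor from $\sE'$. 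Hence this quotient is $H^*(F_{n_\ell})$, Koszul by the base case. Applying a Koszul extension theorem (as in \cite{SY}) --- that a quadratic algebra free over a Koszul quadratic subalgebra with Koszul quadratic quotient is itself Koszul --- then completes the induction.

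The main obstacle will be confirming the precise form of the extension theorem and verifying that the subalgebra-and-quotient identifications respect the full quadratic presentations, not merely the additive structures produced by the spectral sequence. The key algebraic input is the structural feature of the generators of $\sJ$ noted above: every correction term in $\eta_\ell^{p,q}$ contains some $e_{i,r}$ with $i < \ell$, so the relations split cleanly between $\sE'$ and the fiber. If this packaging turns out to be awkward, an alternative is a direct Gröbner-basis approach: exhibit a monomial order on $\sE$ in which the $\eta_j^{p,q}$ have distinct quadratic leading monomials, so that Priddy's theorem yields Koszulness immediately.
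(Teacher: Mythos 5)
Your proposal is correct in outline, but it takes a genuinely different route from the paper, whose proof of Theorem \ref{thm:koszul} is a one-liner on top of Lemma \ref{lem:g-basis}: the set $\cJ$ is a quadratic Gr\"obner basis for $\sJ$ (the initial terms being the distinct monomials $e_{j,p}e_{j,q}$ in the deg-lex order), and a quotient of an exterior algebra by an ideal with a quadratic Gr\"obner basis is Koszul \cite[Thm.~6.16]{Yuz}. So your ``fallback'' is not merely an alternative --- it is precisely the paper's argument, and the monomial order and distinct quadratic leading terms you would need to exhibit are already supplied by Lemma \ref{lem:g-basis}. Your primary route --- induction on $\ell$ using the split extension $1\to F_{n_\ell}\to G\to G'\to 1$, Leray--Hirsch freeness of $H^*(G)$ over the image of $H^*(G')$, identification of that subalgebra with $H^*(G')$ and of the quotient by its augmentation ideal with $H^*(F_{n_\ell})$, and a Koszul extension theorem in the style of Shelton--Yuzvinsky \cite{SY} --- is the classical strategy for fiber-type arrangements and does work here, and the structural input you isolate is exactly right: by \eqref{eq:gb} every correction term of $\eta_\ell^{p,q}$ contains a factor $e_{i,r}$ with $i<\ell$. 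The two points you flag as needing care are genuine but fillable: injectivity of the map from $H^*(G')$ onto the subalgebra generated by the $e_{i,p}$ with $i<\ell$ follows from the algebra retraction induced by the group-theoretic section (or from the normal-form basis of Remark \ref{rem:basis}); and the extension theorem, in the form needed here (graded-commutative $B$, so $A^+B=BA^+$ automatically; $B$ free over $A$; $C=B/A^+B$; $A$ and $C$ Koszul imply $B$ Koszul), follows from the change-of-rings spectral sequence $\Tor^C_p(\k,\Tor^A_q(\k,\k))\Rightarrow\Tor^B_{p+q}(\k,\k)$, where the $C$-module structure causes no trouble because $\Tor^A_q(\k,\k)$ is concentrated in a single internal degree and is therefore a sum of shifted trivial modules. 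In terms of what each approach buys: the Gr\"obner-basis proof is shorter, needs no homological machinery beyond \cite[Thm.~6.16]{Yuz}, and yields the explicit basis of Remark \ref{rem:basis} as a by-product; your induction avoids Gr\"obner theory, reuses the Leray--Hirsch step already present in the proof of Theorem \ref{thm:cohomology}, and makes the analogy with the fiber-type arrangement case of \cite{SY} explicit, at the cost of importing (or reproving) the extension theorem and verifying the sub/quotient identifications at the level of algebras rather than just Hilbert series.
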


To establish these results, we use Gr\"obner basis theory in the exterior algebra.  
Order the generators of $\sE$ as follows:
\[
e_{1,1}<e_{1,2}<\cdots<e_{1,n_1}<e_{2,1}<e_{2,2}<\cdots<e_{2,n_2}<\cdots\cdots <
e_{\ell,1}<e_{\ell,2}<\cdots<e_{\ell,n_\ell}.
\]
If $Q_j=\set{q_1,\dots,q_m}$ is an increasingly ordered subset of $[n_j]=\set{1,\dots,n_j}$, let
\[
e_{Q_j}=\begin{cases}
1&\text{if $Q_j=\emptyset$,}\\
e_{j,q}&\text{if $Q_j=\set{q}$,}\\
e_{j,q_1}e_{j,q_2}e_{j,q_3}\cdots e_{j,q_m}&\text{otherwise.}
\end{cases}
\]
The standard monomials in $\sE$ are elements of the form $e_\bQ=e_{Q_1}e_{Q_2}\cdots e_{Q_\ell}$, where $\bQ=\set{Q_1,Q_2,\dots,Q_\ell}$ and each $Q_j \subset [n_j]$ as above.
The above ordering of the generators of $\sE$ induces the 
deg-lex order 
on the set of all standard monomials. If $\mathbf{P}=\set{P_1,P_2,\dots,P_\ell}$ with $P_j\subset [n_j]$, 
let $|\!|\mathbf{P}|\!|=\sum_{j=1}^\ell |P_j|$.  Then $e_{\mathbf{P}}<e_\bQ$ if $|\!|\mathbf{P}|\!| < |\!|\bQ|\!|$, or $|\!|\mathbf{P}|\!| = |\!|\bQ|\!|$ and there exist $j$, $1\le j \le \ell$, and $k$, $1\le k \le |P_j|$, so that $P_i=Q_i$ for $i<j$, $P_j=\set{p_1,\dots,p_{k-1},p_{k},\dots,p_m}$, $Q_j=\set{p_1,\dots,p_{k-1},q_{k},\dots,q_{m'}}$, and $p_k<q_k$.  The deg-lex order is a linear order on the standard basis $\set{e_\bQ}$ of $\sE$ that is multiplicative in the following sense.  If $e_{\mathbf{P}}$ and $e_{\bQ}$ are nontrivial standard monomials with $e_{\mathbf{P}} e_\bQ \neq 0$, then $e_{\mathbf{P}} e_\bQ$ is a standard monomial up to sign, and $1  < e_{\mathbf{P}} < \pm e_{\mathbf{P}} e_\bQ$.

If $f=\sum c_\bQ e_\bQ$ is an arbitrary element of $\sE$, the initial term $\inn(f)$ of $f$ is the term $c_\bQ e_\bQ$ for which $e_\bQ$ is the largest monomial for all $\bQ$ with $c_\bQ\neq 0$.  If $\sI \subset \sE$ is an ideal of $\sE$, the initial ideal $\inn(\sI)$ of $\sI$ is the ideal generated by the initial terms $\inn(f)$, $f \in \sI$.  A set of elements $f_1,\dots,f_m \in \sI$ is a Gr\"obner basis for $\sI$ if the initial ideal $\inn(\sI)$ is generated by $\inn(f_1),\dots,\inn(f_m)$.

\begin{lemma} \label{lem:g-basis}
The set $\cJ$ is a Gr\"obner basis for the ideal $\sJ$.
\end{lemma}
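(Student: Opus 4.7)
The plan is to combine a straightening upper bound on $\dim(\sE/\sJ)$ with a lower bound from the Hilbert series \eqref{eq:HS}. First I would identify the leading monomials: from \eqref{eq:gb}, $\eta_j^{p,q}$ equals $e_{j,p}e_{j,q}$ plus a linear combination of monomials $e_{i,r}e_{j,s}$ with $i<j$. Under deg-lex each such mixed-layer monomial has the same total degree as $e_{j,p}e_{j,q}$ but smaller leftmost factor (since $i<j$ forces $e_{i,r}<e_{j,p}$), so $\inn(\eta_j^{p,q})=e_{j,p}e_{j,q}$. The monomial ideal $(\inn(\cJ))$ therefore consists exactly of the standard monomials $e_\bQ$ with $|Q_j|\ge 2$ for some $j$, and its complement in the standard basis is $\{e_\bQ:|Q_j|\le 1\text{ for all }j\}$, whose generating function is $\prod_i(1+n_i t)$.

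Next I would show these complementary monomials span $\sE/\sJ$ as a $\Q$-vector space. If a standard monomial $e_\bQ$ contains a factor $e_{j,p}e_{j,q}$ (with $p<q$ both in $Q_j$), write $e_\bQ=\pm e_{j,p}e_{j,q}\cdot e_{\bQ'}$ and replace $e_{j,p}e_{j,q}$ modulo $\sJ$ by the lower terms of $\eta_j^{p,q}$. Every resulting standard monomial is either zero or strictly smaller in deg-lex; since deg-lex is a well-order, iteration terminates at a linear combination of complementary monomials. This gives the upper bound $\dim_\Q(\sE/\sJ)_k\le [t^k]\prod_i(1+n_i t)$.

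For the matching lower bound, Corollary \ref{cor:basis} gives $\cJ\subseteq\ker\ab^*$, hence (since $\ab^*\colon \sE\to H^*(G)$ is a ring map and its kernel is a graded ideal) also $\sJ\subseteq\ker\ab^*$. The induced graded algebra homomorphism $\sE/\sJ\to H^*(G)$ has image equal to the subalgebra generated by $H^1(G)$; if that subalgebra is all of $H^*(G)$, then the Hilbert series formula \eqref{eq:HS} gives $\dim_\Q(\sE/\sJ)_k\ge [t^k]\prod(1+n_i t)$, completing the dimension count.

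The main obstacle is therefore the multiplicative generation of $H^*(G)$ by $H^1(G)$. The plan is to prove this by induction on $\ell$ using the Lyndon-Hochschild-Serre spectral sequence of the split extension $1\to F_{n_\ell}\to G\to \rtimes_{i=1}^{\ell-1}F_{n_i}\to 1$; the almost-direct hypothesis forces a trivial action on $H^*(F_{n_\ell})$, and the techniques of Falk and Randell \cite{FR1} give $E_2$-collapse, reducing the generation question to the analogous statement for $\rtimes_{i=1}^{\ell-1}F_{n_i}$ and $F_{n_\ell}$ (the base case $\ell=1$ is immediate since $F_{n_1}$ is free). Once the two dimension bounds coincide, the spanning set from the straightening step is a basis of $\sE/\sJ$, whence $\inn(\sJ)=(\inn(\cJ))$ and $\cJ$ is a Gr\"obner basis.
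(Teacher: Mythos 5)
Your proposal is correct in outline, but it proves the lemma by a genuinely different route than the paper. The paper's argument is purely algebraic and self-contained: it builds the alternative basis $\set{\xi_\bQ}$ of $\sE$ by replacing the leading pair $e_{j,q_1}e_{j,q_2}$ in each standard monomial with $\eta_j^{q_1,q_2}$, observes that the $\xi_\bQ$ with some $|Q_j|\ge 2$ give a basis of $\sJ$ while the corresponding $e_\bQ$ give a basis of the initial ideal, and concludes by the Hilbert-function criterion of \cite{AHH}; no topology enters, and the lemma is then the input for computing $\hh(\sE/\sJ,t)$ in the proof of Theorem \ref{thm:cohomology}. You instead get the upper bound $\dim(\sE/\sJ)_k\le[t^k]\prod(1+n_it)$ by straightening against the leading terms (your identification $\inn(\eta_j^{p,q})=e_{j,p}e_{j,q}$ and the rewriting/termination step are fine), and the matching lower bound from topology: $\sJ\subseteq\ker\ab^*$ plus degree-one generation of $H^*(G)$ and the known Betti numbers \eqref{eq:HS}. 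This is not circular, since \eqref{eq:HS} is prior literature (\cite{CSchain}, \cite{FR1}) and degree-one generation does not rely on the lemma, but it front-loads into the lemma exactly the content the paper establishes inside the proof of Theorem \ref{thm:cohomology} (there via Leray--Hirsch rather than your LHS spectral sequence); with your proof, Theorem \ref{thm:cohomology} would follow almost immediately, so the paper's structure would be reorganized. What each approach buys: the paper's version keeps the Gr\"obner statement combinatorial and produces the explicit $\xi_\bQ$/$\bar\xi_\bQ$ basis reused in Remark \ref{rem:basis} and in the proof of Theorem \ref{thm:tc}, while yours avoids the $\xi_\bQ$ bookkeeping (including the slightly delicate claim that the $\xi_\bQ$ with some $|Q_j|\ge 2$ span $\sJ$) at the cost of topological input. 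One step you should spell out: saying that $E_2$-collapse ``reduces the generation question to the factors'' requires the standard filtration argument that degree-one generation of the associated graded algebra $E_\infty\cong H^*(\bar G)\otimes H^*(F_{n_\ell})$, together with surjectivity of $H^1(G)\to E_\infty^{1,0}\oplus E_\infty^{0,1}$, implies degree-one generation of $H^*(G)$ itself; alternatively, invoke Leray--Hirsch directly as the paper does, which packages this multiplicative compatibility for you.
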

\begin{proof}
If $Q_j=\set{q_1,\dots,q_m}$ is an increasingly ordered subset of $[n_j]=\set{1,\dots,n_j}$, let
\[
\xi_{Q_j}=\begin{cases}
1&\text{if $Q_j=\emptyset$,}\\
e_{j,q}&\text{if $Q_j=\set{q}$,}\\
\eta_{j}^{q_1,q_2} e_{j,q_3}\cdots e_{j,q_m}&\text{otherwise,}
\end{cases}
\]
where $\eta_j^{p,q} \in \sJ$ is the element of $\cJ$ given by \eqref{eq:gb}.  Note that $\xi_{Q_j} \in \sJ$ if $|Q_j| \ge 2$ and $\xi_{Q_j} \notin \sJ$ if $|Q_j|\le 1$.  If $\bQ=\set{Q_1,\dots,Q_\ell}$, 
where $Q_j \subset [n_j]$ as above for each $j$, define
\[
\xi_\bQ = \xi_{Q_1}\xi_{Q_2}\cdots \xi_{Q_\ell}.
\]

As noted above, the set $\set{e_\bQ}$, for all possible choices of $\bQ$, is the standard basis for the exterior algebra $\sE$.  It is readily checked that the set $\set{\xi_\bQ}$ (again, for all possible choices of $\bQ$) is also a basis for $\sE$.  One can show, for instance, that the map $\psi\colon \sE \to \sE$ defined by $\psi(e_\bQ) =\xi_\bQ$ is an isomorphism (of vector spaces).

To show that $\cJ$ is a Gr\"obner basis for the ideal $\sJ$, it suffices to show that $\sJ$ and the ideal ${\sf{I}}=\langle \inn(\eta_j^{p,q}) \mid 1\le j \le \ell, 1\le p<q\le n_j\rangle$ generated by the initial terms of the elements of $\cJ$ have the same Hilbert function, see \cite[Cor. 1.2]{AHH}.  Since $\set{\xi_\bQ}$ is a basis for $\sE$ and $\xi_{Q_j}\in \sJ$ if $|Q_j|\ge 2$, the set
\[
\set{\xi_\bQ = \xi_{Q_1}\xi_{Q_2}\cdots \xi_{Q_\ell} \mid |Q_j| \ge 2\ \text{for some $j$, $1\le j \le \ell$}}
\]
is a basis for $\sJ$.  The initial term of $\eta_j^{p,q}$ in the deg-lex order is given by $\inn(\eta_j^{p,q})=e_{j,p}e_{j,q}$.  Consequently, the set 
\[
\set{e_\bQ = e_{Q_1}e_{Q_2}\cdots e_{Q_\ell} \mid |Q_j| \ge 2\ \text{for some $j$, $1\le j \le \ell$}}
\]
is a basis for ${\sf{I}}$.  It follows immediately that the ideals ${\sf{I}}$ and $\sJ$ have the same Hilbert function.
\end{proof}

We now establish the main results of this section.

\begin{proof}[Proof of Theorem \ref{thm:cohomology}]
Let $G=\rtimes_{i=1}^\ell F_{n_i}$ be an almost-direct product of free groups.  We first show that $H^*(G)=H^*(G;\Q)$ is generated in degree one.  This is clear if the cohomological dimension of $G$ is equal to one. 

Consider the split, short exact sequence of groups $1 \to F_{n_\ell} \to G \to \rtimes_{i=1}^{\ell-1} F_{n_i}\to 1$, and the corresponding fibration $F \xrightarrow{\iota}E \xrightarrow{\rho} B$ of Eilenberg-Mac\,Lane spaces, with fiber $F=\bigvee_{n_\ell} S^1$, a bouquet of $n_\ell$ circles.  Since $G$ is an almost-direct product, the group $\bar{G}=\rtimes_{i=1}^{\ell-1} F_{n_i}=\pi_1(B)$ acts trivially on the cohomology $H^*(F_{n_\ell})=H^*(F)$.  Checking that the map $\iota^* \colon H^1(G)=H^1(E) \to H^1(F)=H^1(F_{n_\ell})$ is surjective, by the Leray-Hirsch Theorem (see \cite[Thm. 5.10]{McCleary}), we have an isomorphism of vector spaces
\[
H^*(G) \cong H^*(\bar{G}) \otimes_\Q H^*(F_{n_\ell})
\]

The group $\bar{G}$ has cohomological dimension $\ell-1$. So we may inductively assume that $H^*(\bar{G})$ is generated in degree one.  Using this, the fact that $H^*(F_{n_\ell})$ is also generated in degree one, and the above isomorphism, we conclude that $H^*(G)$ is  generated in degree one as asserted.

By Theorem \ref{thm:h2}, the map $\ab^*\colon \sE^1 \to H^1(G)$ is an isomorphism.  This, together with the above considerations, implies that $\ab^*\colon \sE \to H^*(G)$ is a surjection of algebras.  Thus, $H^*(G) \cong \sE/\ker(\ab^*)$.  Since $\sJ=\ker(\ab^*\colon \sE^2 \to H^2(G))$, we complete the proof by showing that $H^*(G)$ and $\sE/\sJ$ have the same Hilbert series.  As noted in \eqref{eq:HS}, the Hilbert series of $H^*(G)$ is given by $\hh(H^*(G),t)=\prod_{i=1}^\ell(1+n_i t)$.

The proof of Lemma \ref{lem:g-basis} implies that the quotient $\sA=\sE/\sJ$ has a basis with elements in correspondence with those $\xi_\bQ=\xi_{Q_1}\xi_{Q_2}\cdots\xi_{Q_\ell}$ with $|Q_j|\le 1$ for each $j$, $1\le j \le \ell$.  It follows that the summand $\sA^k$ of all degree $k$ homogeneous elements is a vector space of dimension $\sum n_{p_1}n_{p_2}\cdots n_{p_k}$, the sum over all $1\le p_1 <p_2<\cdots<p_k \le \ell$. Consequently, the Hilbert series $\hh(\sA,t)=\prod_{i=1}^\ell(1+n_i t)$ is equal to that of $H^*(G)$.
\end{proof}

\begin{remark} \label{rem:basis}
The above argument yields an explicit basis for $H^*(G) \cong \sA$.  
For $\xi \in \sE$, denote the image of $\xi$ under the natural projection ${\sf{p}}\colon\sE \to \sA=\sE/\sJ$ by $\bar{\xi}={\sf{p}}(\xi)$. Then $\sA$ has basis
\[
\set{\bar{\xi}_\bQ \mid \bQ=\set{Q_1,\dots,Q_\ell}\ \text{and}\ |Q_j|\le 1\ \text{for each $j$, $1\le j \le \ell$}}.
\]
\end{remark}

\begin{proof}[Proof of Theorem \ref{thm:koszul}]
By Theorem \ref{thm:cohomology}, $H^*(G) \cong \sE/\sJ$ is the quotient of an exterior algebra by a homogeneous ideal generated in degree two.  Since $\sJ$ has a quadratic Gr\"obner basis by Lemma \ref{lem:g-basis}, $H^*(G)$ is Koszul (see, for instance, \cite[Thm. 6.16]{Yuz}).
\end{proof}

\begin{example} \label{ex:braid4}
In the case where $G=P_\ell$ is the pure braid group, Theorem \ref{thm:cohomology} shows that the cohomology ring $H^*(P_\ell)$ is generated by degree one elements $e_{i,j}$, $1\le i<j\le \ell$, which satisfy (only) the relations
\[
e_{i,j}e_{i,k}-e_{i,j}e_{j,k}+e_{i,k}e_{j,k}=0\quad\text{for}\quad  1 \le i < j < k \le \ell
\]
and their consequences, see Example \ref{ex:braid3}.  This recovers the classical description of $H^*(P_\ell)$ due to Arnold \cite{Ar} and Cohen \cite{Co}. 

The Koszulity of $H^*(P_\ell)$ ensured by Theorem \ref{thm:koszul} is a consequence of work of Kohno \cite{Kohno}, see also Shelton and Yuzvinsky \cite{SY}.
\end{example}

\section{Topological complexity} \label{sec:tc}
Let $X$ be a path-connected topological space.  We will focus on the case where $X$ is an Eilenberg-Mac\,Lane space of type $K(G,1)$ for an almost-direct product of free groups $G$, so assume that $X$ has the homotopy type of a finite CW-complex.  Viewing $X$ as the space of configurations of a mechanical system, the motion planning problem consists of constructing an algorithm which takes as input pairs of configurations $(x_0,x_1) \in X \times X$, and produces a continuous path $\gamma\colon [0,1] \to X$ from the initial configuration $x_0=\gamma(0)$ to the terminal configuration $x_1=\gamma(1)$.  The motion planning problem is of interest in robotics, see, for example, Latombe \cite{La} and Sharir \cite{Sh}.

A topological approach to this problem was recently developed by Farber, see the survey \cite{Fa05}.  Let $PX$ denote the space of all continuous paths $\gamma\colon [0,1] \to X$, equipped with the compact-open topology.  The map $\pi\colon PX \to X \times X$, $\gamma \mapsto (\gamma(0),\gamma(1))$, defined by sending a path to its endpoints is a fibration, with fiber $\Omega{X}$, the based loop space of $X$. The motion planning problem asks for a section of this fibration, a map $s\colon X\times X \to PX$ satisfying $\pi \circ s = \id_{X\times X}$.

It would be desirable for the motion planning algorithm to depend continuously on the input.  However, one can show that there exists a globally continuous section $s\colon X\times X \to PX$ if and only if $X$ is contractible, see \cite[Thm.~1]{Fa03}. 

\begin{definition}
The \emph{topological complexity} of $X$, $\tc(X)$,  is the smallest positive integer $k$ for which $X\times X=U_1\cup\dots\cup U_k$, where $U_i$ is open and there exists a continuous section $s_i\colon U_i\to PX$, $\pi \circ s_i=\id_{U_i}$, for each $i$, $1\le i \le k$.  In other words, the topological complexity of $X$ is
the sectional category (or Schwarz genus) of the path space fibration, $\tc(X)=\secat(\pi\colon PX \to X\times X)$.
\end{definition}

Observe that the topological complexity of $X$ is a homotopy type invariant.  If $G$ is a discrete group, define $\tc(G)$, the topological complexity of $G$, to be that of an Eilenberg-Mac\,Lane space of type $K(G,1)$.  In \cite[\S{31}]{Fa05}, Farber poses the problem of determining the topological complexity of $G$ in terms of other invariants of $G$, such as the cohomological dimension, $\dim(G)$.  In this section, we solve this problem for a large class of almost-direct products of free groups.

\begin{theorem} \label{thm:tc}
Let $G = F_{n_\ell} \rtimes \dots \rtimes F_{n_1}$ be an almost-direct product of free groups.  
If $n_j \ge 2$ for each $j$ and $m$ is a non-negative integer, 
then $\tc(G \times \mathbb Z^m)=2\ell+m+1$.  
\end{theorem}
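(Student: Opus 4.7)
My plan is to sandwich $\tc(G\times\Z^m)$ between matching bounds of $2\ell+m+1$. For the upper bound I combine two standard estimates from Farber's theory: the cohomological-dimension bound $\tc(K(\pi,1))\le 2\,\mathrm{cd}(\pi)+1$, which gives $\tc(G)\le 2\ell+1$ since $\mathrm{cd}(G)=\ell$ by \cite{CSchain}, together with the product inequality $\tc(X\times Y)\le\tc(X)+\tc(Y)-1$ and Farber's equality $\tc(T^m)=m+1$. These combine to give $\tc(G\times\Z^m)\le(2\ell+1)+(m+1)-1=2\ell+m+1$.

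For the lower bound I will use the zero-divisor cup-length estimate $\tc(X)\ge\zcl(X)+1$. Writing $\bar z=z\otimes 1-1\otimes z$ and letting $y_1,\dots,y_m$ denote the standard generators of $H^1(T^m)$, the strategy is to show that the product of $2\ell+m$ zero-divisors
\[
\omega=\Bigl(\prod_{i=1}^\ell \bar e_{i,1}\bar e_{i,2}\Bigr)\cdot\Bigl(\prod_{k=1}^m \bar y_k\Bigr)
\]
is nonzero in $H^*(G\times T^m)^{\otimes 2}$. Under the K\"unneth isomorphism $H^*(G\times T^m)^{\otimes 2}\cong H^*(G)^{\otimes 2}\otimes H^*(T^m)^{\otimes 2}$, the element $\omega$ corresponds (up to signs from swapping middle tensor factors) to $\omega_G\otimes\omega_{T^m}$, where $\omega_{T^m}=\prod_k\bar y_k$ is the familiar nonzero top-degree zero-divisor witnessing $\zcl(T^m)=m$. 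Matters thus reduce to proving $\omega_G=\prod_i\bar e_{i,1}\bar e_{i,2}\neq 0$ in $H^*(G)^{\otimes 2}$.

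Since $H^r(G)=0$ for $r>\ell$, the total-degree-$2\ell$ element $\omega_G$ lies entirely in bi-degree $(\ell,\ell)$. I will prove $\omega_G\neq 0$ by computing the coefficient of the specific basis element $\alpha\otimes\beta=(e_{1,1}e_{2,1}\cdots e_{\ell,1})\otimes(e_{1,2}e_{2,2}\cdots e_{\ell,2})$ of $H^\ell(G)\otimes H^\ell(G)$, a legitimate basis element by Remark \ref{rem:basis} because $n_i\ge 2$. The diagonal contribution, in which every factor $\bar e_{i,1}\bar e_{i,2}$ supplies its bi-degree-$(1,1)$ piece $-e_{i,1}\otimes e_{i,2}+e_{i,2}\otimes e_{i,1}$, produces $\pm 1$ times $\alpha\otimes\beta$ from the unique choice of signs giving the correct slot-by-slot indices. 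Every other contribution in bi-degree $(\ell,\ell)$ must pair an equal number of $(2,0)$ and $(0,2)$ selections, indexed by subsets $A,C\subseteq[\ell]$ with $|A|=|C|\ge 1$; by Theorem \ref{thm:cohomology} the resulting $e_{i,1}e_{i,2}$ factors rewrite in $H^*(G)$ as sums of $e_{j,r}e_{i,s}$ with $j<i$ (and vanish outright when $i=1$). An index-matching argument then shows that producing $\alpha\otimes\beta$ from such a mixed contribution would require simultaneous bijections $\phi\colon C\to A$ and $\phi'\colon A\to C$ satisfying $\phi(k)>k$ and $\phi'(k)>k$; these force $\max A>\max C$ and $\max C>\max A$, a contradiction. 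Hence only the diagonal contribution survives, so $\omega_G\neq 0$.

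The main obstacle is precisely this combinatorial non-cancellation step. The upper bounds, the K\"unneth reduction, and the Koszul-sign bookkeeping in $H^*(G)^{\otimes 2}$ are routine, but one must verify carefully that no mixed $(2,0)/(0,2)$ pairing can replicate the index pattern of $\alpha\otimes\beta$. Once $\omega\neq 0$ is established, one obtains $\zcl(G\times\Z^m)\ge 2\ell+m$ and hence $\tc(G\times\Z^m)\ge 2\ell+m+1$, matching the upper bound.
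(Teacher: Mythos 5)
Your proposal is correct, and its overall architecture coincides with the paper's: the upper bound $\tc(G\times\Z^m)\le 2\ell+m+1$ is obtained exactly as in the text (dimension bound plus product inequality plus $\tc(T^m)=m+1$), and the lower bound is the same zero-divisor cup-length estimate applied to the same product of $2\ell+m$ zero divisors. Where you genuinely diverge is in verifying that $\prod_{i=1}^\ell\hat x_i\hat y_i\neq 0$ in $H^*(G)\otimes H^*(G)$. The paper proves by induction on $k$ that $\prod_{i=1}^k\hat x_i\hat y_i$ equals, modulo the ideal $\sJ'=\langle \sJ\otimes 1,\,1\otimes\sJ\rangle$, the full alternating sum $\pm\sum_{I\subset[k]}(-1)^{|I|}X[I,k]\otimes Y[I,k]$; the key lemma is that the cross terms $X[I,k-1]\,e_{k,1}e_{k,2}$ lie in $\sJ$, because the partial products are ``full'' in blocks $1,\dots,k-1$, so repeated reduction by \eqref{eq:gb} keeps producing a repeated block until block $1$ is reached; it then concludes from linear independence of the $2^\ell$ classes $\bar X[I,\ell]$ (Remark \ref{rem:basis}). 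You instead expand globally and extract the single coefficient of $(e_{1,1}\cdots e_{\ell,1})\otimes(e_{1,2}\cdots e_{\ell,2})$, showing mixed $(2,0)/(0,2)$ selections cannot reproduce it. Your $\max A$ versus $\max C$ contradiction is the right point, and your sketched ``index-matching argument'' can be made airtight by a short counting argument: reduction modulo the Gr\"obner basis $\cJ$ (Lemma \ref{lem:g-basis}) only ever moves a generator from a block to a strictly smaller block, so for each threshold $t$ the number of factors in blocks $\ge t$ never increases; comparing initial and final block multiplicities forces $|C_{\ge t}|\le|A_{\ge t}|$ on the left tensor factor and $|A_{\ge t}|\le|C_{\ge t}|$ on the right, which together with $A\cap C=\emptyset$ gives the contradiction. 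Note that your route proves less than the paper's Claim -- the mixed terms need not vanish in $\sA\otimes\sA$ (e.g.\ $e_{2,1}e_{2,2}$ is in general nonzero there), but missing one basis coefficient is all you need -- while the paper's route yields the cleaner closed formula for $\bar Z_\ell$. Two small points to tighten: the dimensional upper bound is $\tc(X)\le 2\dim X+1$ for a finite complex, so one needs geometric dimension $\ell$, not merely cohomological dimension $\ell$; this is exactly what Lemma \ref{lem:cd=gd} (via the explicit $\ell$-dimensional $K(G,1)$ of \cite{CSchain}) supplies, so your citation suffices once this is made explicit. And your coefficient extraction implicitly uses that reduction modulo $\cJ$ produces a well-defined normal form in the basis of Remark \ref{rem:basis}, which is guaranteed by Lemma \ref{lem:g-basis} and should be invoked.
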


This result is a 
consequence of Theorem \ref{thm:cohomology}, together with known properties of topological complexity. We record the requisite properties before giving the proof.

First, if $X$ is a finite dimensional cell complex as above, then $\tc(X) \le 2\dim(X)+1$, see \cite[Thm. 14.1]{Fa05}.  Recall that the geometric dimension, $\geomdim(G)$, of a group $G$ is the smallest dimension of an Eilenberg-Mac\,Lane complex of type $K(G,1)$.

\begin{lemma} \label{lem:cd=gd}
If $G$ is an almost direct product of free groups, the geometric dimension of $G$ is equal to the cohomological dimension of $G$, $\dim(G) = \geomdim(G)$.
\end{lemma}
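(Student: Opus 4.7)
The general inequality $\dim(G) \le \geomdim(G)$ holds for any group: if $Y$ is any $K(G,1)$-complex of dimension $d$, then the cellular chain complex of its universal cover $\widetilde{Y}$ is a free $\Z{G}$-resolution of $\Z$ of length $d$, so $\dim(G) \le d$. Taking the infimum over all such $Y$ gives $\dim(G) \le \geomdim(G)$. Since it is already recorded in Section \ref{sec:intro} (in the discussion surrounding \eqref{eq:HS}) that $\dim(G) = \ell$ for $G = \rtimes_{i=1}^\ell F_{n_i}$, the entire content of the lemma is the reverse inequality $\geomdim(G) \le \ell$. So the plan is to exhibit an explicit $K(G,1)$-complex of dimension $\ell$.

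The quickest route is to invoke the CW-complex $X_G$ already referenced in Section \ref{sec:fox}: the construction of \cite{CSchain} produces a $K(G,1)$-complex $X_G$ whose cellular chain complex on the universal cover is precisely the resolution \eqref{eq:res} of length $\ell$. In particular $\dim X_G = \ell$, and hence $\geomdim(G) \le \ell$. This gives the desired equality without further work; I would present the lemma essentially as an observation packaging results already cited.

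For a self-contained alternative I would build $X_G$ inductively from the tower of split extensions furnished by the almost-direct-product decomposition. Set $Y_1 = \bigvee_{n_1} S^1$, a one-dimensional $K(F_{n_1},1)$. At stage $i \ge 2$, the split short exact sequence
\[
1 \to F_{n_i} \to \rtimes_{j=1}^i F_{n_j} \to \rtimes_{j=1}^{i-1} F_{n_j} \to 1
\]
is modeled by the Borel construction (homotopy quotient) of the monodromy action of $\rtimes_{j=1}^{i-1} F_{n_j}$ on $\bigvee_{n_i} S^1$ coming from $\alpha_i$, giving a fiber bundle $\bigvee_{n_i} S^1 \to Y_i \to Y_{i-1}$ whose total space is a $K(\rtimes_{j=1}^i F_{n_j}, 1)$. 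One chooses a CW-structure compatible with the projection so that $\dim Y_i \le \dim Y_{i-1} + 1$; inductively $\dim Y_\ell \le \ell$, and $Y_\ell$ is the desired $K(G,1)$.

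There is no substantive obstacle: the inequality $\dim \le \geomdim$ is formal, and the reverse is witnessed by a complex already in the literature. The only point requiring any care is choosing the CW-structure in the inductive construction so that each bundle projection is cellular and the fiber contributes only one additional cellular dimension, which is routine.
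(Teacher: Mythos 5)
Your main argument is exactly the paper's proof: the inequality $\dim(G)\le\geomdim(G)$ is formal, and the reverse is witnessed by the $\ell$-dimensional $K(G,1)$-complex $X_G$ constructed in \cite[\S1.3]{CSchain}, which is precisely what the paper cites. Your alternative inductive sketch is superfluous and slightly loose as stated (an $\IA$-automorphism of $F_{n_i}$ is realized only by a homotopy equivalence of $\bigvee_{n_i}S^1$, not a homeomorphism, so one gets a mapping-torus-type construction as in \cite{CSchain} rather than a genuine fiber bundle or finite-dimensional Borel construction), but this does not affect the correctness of your primary route.
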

\begin{proof}
For an arbitrary iterated semidirect product of finitely generated free groups $G=F_{n_\ell} \rtimes \dots \rtimes F_{n_1}$ of cohomological dimension $\ell$, a $K(G,1)$-complex of dimension $\ell$ is constructed in \cite[\S1.3]{CSchain}.
\end{proof}
Note that this lemma follows from a classical result of Eilenberg and Ganea \cite{EG} in the case where $\dim(G)\ge 3$.  If $G=F_n \rtimes F_m$, the cell complex of \cite[\S1.3]{CSchain} is the ``presentation $2$-complex'' associated the the presentation of Proposition \ref{prop:presentation}.  For an iterated semidirect product of free groups $G$, this lemma and the dimensional upper bound noted above yield
\begin{equation} \label{eq:upper bound}
\tc(G) \le 2 \dim(G) + 1.
\end{equation}

Next, the topological complexity of a product space admits the  
upper bound 
\begin{equation*} 
\tc(X\times Y) \le \tc(X) + \tc(Y) - 1,
\end{equation*}
see \cite[\S{12}]{Fa04}.  Consequently, if $G_1$ and $G_2$ are groups (of finite cohomological dimension), then 
\begin{equation} \label{eq:product}
\tc(G_1 \times G_2) \le \tc(G_1) + \tc(G_2)-1.
\end{equation}

Finally, the sectional category of an arbitrary fibration admits a cohomological lower bound.  If $A=\bigoplus_{j=0}^\ell A^j$ is a graded algebra over a field $\k$, with $A^j$ finite-dimensional for each $j$, define $\cl(A)$, the cup length of $A$, to be the largest integer $q$ for which there are homogeneous elements $a_1,\dots,a_q$ of positive degree in $A$ such that $a_1\cdots a_q \neq 0$. 
If $p\colon E \to B$ is a fibration, the sectional category admits the 
lower bound 
\[
\secat(p\colon E \to B) > \cl\bigl(\ker(p^*\colon H^*(B;\k) \to H^*(E;\k)\bigr),
\]
see \cite[\S{8}]{Ja}. We will work with rational coefficients, and write $H^*(Y)=H^*(Y;\Q)$.

For the path space fibration $\pi \colon PX \to X \times X$, we have
\begin{equation*}
\begin{aligned}
\tc(X)=\secat(\pi \colon PX \to X \times X) &> 
\cl\bigl(\ker(p^*\colon H^*(X\times X) \to H^*(PX)\bigr)\\&=
\cl\bigl(\ker(H^*(X)\otimes H^*(X) \xrightarrow{\ \cup\,} H^*(X)\bigr),
\end{aligned}
\end{equation*}
using the K\"unneth formula and the fact that $PX \simeq X$, see \cite[Thm.~7]{Fa03}.  That is, the topological complexity of $X$ is greater than the {zero divisor cup length}, $\zcl(H^*(X))$, the cup length of the ideal $Z=\ker(H^*(X)\otimes H^*(X) \xrightarrow{\ \cup\,} H^*(X))$ of zero divisors.  In terms of groups, this lower bound may be stated as
\begin{equation} \label{eq:zcl bound}
\tc(G) > \zcl(H^*(G)).
\end{equation}

We now establish the main result of this section.

\begin{proof}[Proof of Theorem \ref{thm:tc}]
Let $G=F_{n_\ell}\rtimes \dots \rtimes F_{n_1}$ be an almost-direct product of free groups with $n_j \ge 2$ for each $j$, and let $m$ be a nonnegative integer.  The topological complexity of the $m$-dimensional torus $(S^1)^{\times m}$ is equal to $m+1$, see \cite[Thm. 13]{Fa03}.  Since this torus is a $K(\Z^m,1)$-space, we have $\tc(\Z^m)=\tc((S^1)^{\times  m})=m+1$.  The product inequality \eqref{eq:product}
 and dimensional upper bound \eqref{eq:upper bound} yield
\[
\tc(G\times \Z^m) \le \tc(G)+\tc(\Z^m)-1\le (2 \ell+1)+(m+1) -1= 2\ell + m + 1.
\]
In light of the lower bound \eqref{eq:zcl bound}, it suffices to show that $\zcl(H^*(G\times \Z^m)) \ge 2\ell+m$.

By the K\"unneth formula, we have $H^*(G\times \Z^m)=H^*(G) \otimes H^*(\Z^m)$ (recall that we use rational coefficients).  The cohomology of $\Z^m$ is an exterior algebra, generated by degree one elements $z_1,\dots,z_n$.  For each $i$, let $\hat{z}_i = 1\otimes z_i-z_i\otimes 1 \in H^*(\Z^m)\otimes H^*(\Z^m)$.  Observe that $\hat{z}_i$ is a zero-divisor.  The product $\hat{z}_1\cdot \hat{z}_2\cdots \hat{z}_m$ is nonzero.  In fact, one has
\begin{equation} \label{eq:prod1}
\hat{z}_1\cdot\hat{z}_2\cdots\hat{z}_m = \sum_{(I,I')} (-1)^{|I|}\mathrm{sign}(\sigma)\, z_I \otimes z_{I'},
\end{equation}
where the sum is over all partitions $(I,I')$ by ordered subsets of $[z]=\set{1,\dots,m}$, 
$z_I=z_{i_1}\cdots z_{i_k}$ if $I=(i_1,\dots,i_k)$, 
and $\sigma$ is the shuffle on $[m]$ which puts every element of $I'$ after all elements of $I$, preserving the orders inside $I$ and $I'$, see \cite[Lemma 10]{FY}.  This, together with the fact that 
$\tc(\Z^m)=m+1$, implies that $\zcl(H^*(\Z^m))=m$.

Since, clearly, $\zcl(H^*(G) \otimes H^*(\Z^m)) \ge \zcl(H^*(G)) + \zcl(H^*(\Z^m))$, it remains to show that $\zcl(H^*(G)) \ge 2\ell$.  
For each $i$, $1\le i \le \ell$, let $x_i=e_{i,1}$ and $y_i=e_{i,2}$ be classes in $H^1(G)$ corresponding to distinct generators of the free group $F_{n_i}$.  
As above, consider the zero divisors $\hat{x}_i=1\otimes x_i-x_i\otimes 1$ and 
$\hat{y}_i=1\otimes y_i-y_i\otimes 1$.
We will show that the product 
\begin{equation} \label{eq:Zproduct}
\prod_{i=1}^\ell\hat{x}_i \hat{y_i}=\prod_{i=1}^\ell (1\otimes x_i-x_i\otimes 1)(1\otimes y_i-y_i\otimes 1)
\end{equation}
is non-zero in $H^*(G) \otimes H^*(G)$.  

By Theorem \ref{thm:cohomology}, $H^*(G) \cong \sE/\sJ$, where $\sE$ is the exterior algebra on $H^1(G)$, and $\sJ$ is the ideal generated by the elements $\eta_j^{p,q}$ recorded in \eqref{eq:gb}.  We first consider the product \eqref{eq:Zproduct} in $\sE \otimes \sE$.
For $1\le k \le \ell$, if $I$ is an ordered subset of $[k]=\set{1,\dots,k}$, 
define $X[I,k]=u_1\cdots u_k$, where $u_i=x_i$ if $i \in I$ and $u_i=y_i$ if $i \notin I$. 
For each such $k$, consider the element $Z_k=\prod_{i=1}^k \hat{x}_i \hat{y}_i$ in $\sE\otimes\sE$.  Let $\sJ'$ be the ideal in $\sE\otimes \sE$ generated by $\set{\eta\otimes 1,1\otimes \eta \mid \eta \in \sJ}$. 

\begin{claim} For each $k$, $1\le k \le \ell$, 
\[
Z_k=\prod_{i=1}^k\hat{x}_i \hat{y}_i=
\epsilon_k\sum_{I \subset[k]} (-1)^{|I|} X[I,k] \otimes Y[I,k] + \omega_k,
\]
where $\epsilon_k=(-1)^{\lfloor\frac{k}{2}\rfloor}$, 
$Y[I,k]=X[[k]\setminus I,k]$ and $\omega_k \in \sJ'$.
\end{claim}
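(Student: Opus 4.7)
The plan is to induct on $k$. For the base case $k = 1$, a direct Koszul-sign expansion yields
\begin{equation*}
Z_1 = \hat{x}_1 \hat{y}_1 = \bigl(y_1 \otimes x_1 - x_1 \otimes y_1\bigr) + \bigl(1 \otimes x_1 y_1 + x_1 y_1 \otimes 1\bigr),
\end{equation*}
which matches the claimed form with $\epsilon_1 = 1$ and $\omega_1 = 1 \otimes \eta_1^{1,2} + \eta_1^{1,2} \otimes 1 \in \sJ'$, since for $j = 1$ the defining relation \eqref{eq:gb} collapses to $\eta_1^{1,2} = e_{1,1} e_{1,2} = x_1 y_1$ with no correction terms.

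For the inductive step, let $S_k$ denote the displayed sum and write $Z_{k+1} = (S_k + \omega_k)\hat{x}_{k+1}\hat{y}_{k+1}$. The summand $\omega_k \hat{x}_{k+1} \hat{y}_{k+1}$ lies in $\sJ'$ automatically. Expanding
\begin{equation*}
\hat{x}_{k+1} \hat{y}_{k+1} = \bigl(y_{k+1} \otimes x_{k+1} - x_{k+1} \otimes y_{k+1}\bigr) + \bigl(1 \otimes x_{k+1} y_{k+1} + x_{k+1} y_{k+1} \otimes 1\bigr),
\end{equation*}
I treat the ``main'' and ``diagonal'' contributions separately. For the main piece, the Koszul sign $(-1)^k$ from moving a degree-one factor past $Y[I,k]$ (of degree $k$), combined with the bijection $I \leftrightarrow I \cup \{k+1\}$ for $I \subset [k]$ and the corresponding flip $(-1)^{|I|} = -(-1)^{|I \cup \{k+1\}|}$, yields
\begin{equation*}
S_k(y_{k+1}\otimes x_{k+1}- x_{k+1}\otimes y_{k+1})=(-1)^k\epsilon_k\!\!\sum_{J\subset[k+1]}\!\!(-1)^{|J|}X[J,k+1]\otimes Y[J,k+1],
\end{equation*}
which equals $S_{k+1}$ by the parity identity $\epsilon_{k+1}=(-1)^k\epsilon_k$ (a direct check using $\lfloor(k+1)/2\rfloor-\lfloor k/2\rfloor\equiv k\pmod 2$).

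The crux is showing that the diagonal contribution lies in $\sJ'$. I substitute $x_{k+1}y_{k+1}=\eta_{k+1}^{1,2}-\tau$, where $\tau=\sum_{i\le k,\,r,\,s}\kappa_{i,k+1}^{1,2,r,s}\,e_{i,r}e_{k+1,s}$; the $\eta_{k+1}^{1,2}$ piece contributes to $\sJ'$ immediately. For the $\tau$ piece I would establish the following sub-lemma: for any word $w=u_1u_2\cdots u_m$ with $u_j\in\{e_{j,1},\ldots,e_{j,n_j}\}$ and any $1\le i\le m$, $1\le r\le n_i$, the product $w\cdot e_{i,r}$ lies in $\sJ$. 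The proof is by induction on $i$. For $i=1$, commute $e_{1,r}$ adjacent to $u_1$; then $\pm u_1 e_{1,r}=\pm\eta_1^{?,?}$ (or zero), since $\eta_1^{p,q}=e_{1,p}e_{1,q}$ directly. For $i>1$, commute $e_{i,r}$ adjacent to $u_i$ and rewrite $u_i e_{i,r}=\pm\eta_i^{?,?}\mp\sum\kappa\,e_{i',r'}e_{i,s'}$ with $i'<i$; the $\eta_i$ piece is in $\sJ$, and each correction, after commuting $e_{i',r'}$ next to $u_{i'}$, contains the prefix $u_1\cdots u_{i'}e_{i',r'}$, which lies in $\sJ$ by the inductive hypothesis (applied at the smaller index $i'<i$). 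Applying the sub-lemma to both $Y[I,k]$ and $X[I,k]$ yields $Y[I,k]\cdot\tau\in\sJ$ and $X[I,k]\cdot\tau\in\sJ$, whence $S_k(1\otimes\tau+\tau\otimes1)\in\sJ'$, completing the induction.

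The main obstacle is the sub-lemma, since $x_{k+1}y_{k+1}$ itself is not in $\sJ$ for $k\ge 1$: one must iteratively trade a doubled first-index product $e_{i,p}e_{i,q}$ for the ideal element $\eta_i^{p,q}$ plus strictly-lower-first-index corrections, with the recursion bottoming out at $j=1$ where no correction appears. Everything else reduces to Koszul-sign bookkeeping and Pascal-style subset combinatorics.
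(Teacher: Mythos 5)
Your proof is correct and follows essentially the same route as the paper: induct on $k$, split $\hat{x}_{k+1}\hat{y}_{k+1}$ into the off-diagonal part (handled by the Koszul-sign and subset bookkeeping, with $\epsilon_{k+1}=(-1)^k\epsilon_k$) and the diagonal part, which is pushed into $\sJ'$ by rewriting $x_{k+1}y_{k+1}$ via \eqref{eq:gb} and descending in the first index until the corrections vanish at index $1$. Your sub-lemma on $w\cdot e_{i,r}\in\sJ$ is just a precise formulation, by induction on the index $i$, of the step the paper compresses into ``repeated use of \eqref{eq:gb}.''
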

The proof of the claim is by induction on $k$.  For $k=1$, since multiplication in $\sE\otimes \sE$ is given by 
\[
(a\otimes b)\cdot (c\otimes d)=(-1)^{|b|\cdot |c|} ac \otimes bd,
\] 
where $|u|$ denotes the degree of $u$, we have
\begin{equation} \label{eq:xy}
\hat{x}_j \hat{y}_j=y_j\otimes x_j-x_j\otimes y_j + x_j y_j \otimes 1+1\otimes x_j y_j
\end{equation}
for each $j$.  
Since $x_1y_1=e_{1,1}e_{1,2}=\eta_1^{1,2} \in \sJ$,  
$X(\emptyset,1)=y_1$, $X[[1],1]=x_1$, and $\epsilon_1=1$, the claim holds for $Z_1=\hat{x}_1\hat{y}_1$.

Inductively assume that $Z_{k-1}=\prod_{i=1}^{k-1}\hat{x}_i \hat{y}_i$ is as asserted, and consider
\begin{equation} \label{eq:Zk}
Z_{k}= Z_{k-1} \hat{x}_{k} \hat{y}_{k}
=\Bigl(\epsilon_{k-1}\sum_{I \subset[k-1]} (-1)^{|I|} X[I,k-1] \otimes Y[I,k-1] + \omega_{k-1} \Bigr)\hat{x}_{k} \hat{y}_{k}.
\end{equation}
Since $\omega_{k-1} \in \sJ'$ by induction, we have $\omega_{k-1} \hat{x}_{k} \hat{y}_{k}\in \sJ'$.  A straightforward calculation reveals that the sum
\[
\epsilon_{k}\sum_{J \subset[k]} (-1)^{|J|} X[J,k] \otimes Y[J,k]
\]
is equal to 
\[
\Bigl(\epsilon_{k-1}\sum_{I \subset[k-1]} (-1)^{|I|} X[I,k-1] \otimes Y[I,k-1]\Bigr)
(y_{k}\otimes x_{k}-x_{k}\otimes y_{k}).
\]
Using \eqref{eq:Zk} and \eqref{eq:xy}, to establish the claim, it remains to show that 
$X[I,k-1] x_k y_k \in \sJ$ for $I \subset [k-1]$.  By \eqref{eq:gb}, we have
\[
x_{k} y_{k} =e_{k,1}e_{k,2}= \eta_{k}^{1,2}-\sum_{i=1}^{k-1} \sum_{r=1}^{n_i} \sum_{s=1}^{n_{k}} \kappa_{i,k}^{1,2,r,s} e_{i,r} e_{k,s}.
\]
Since $X[I,k-1]=u_1\cdots u_{k-1}=e_{1,q_1} \cdots e_{k-1,q_{k-1}}$, where $q_j \in \set{1,2}$, it suffices to show that $e_{1,q_1} \cdots e_{k-1,q_{k-1}}e_{i,r} e_{k,s} \in \sJ$ for $1\le i \le k-1$, $1\le r\le n_i$, and $1\le s \le n_k$. Since $e_{1,p}e_{1,q} \in \sJ$ for $1\le p<q\le n_1$, this may be accomplished by repeated use of \eqref{eq:gb} as above, completing the proof of the claim.
 
Now consider the product \eqref{eq:Zproduct} in $H^*(G) \otimes H^*(G)=\sA \otimes \sA$, where $\sA=\sE/\sJ$. If $\xi \in \sE$, recall that $\bar{\xi}$ denotes the image of $\xi$ under the natural projection $\sf{p}\colon \sE \to \sA$. From the claim, we obtain
\[
\bar{Z_\ell}=\epsilon_\ell\sum_{I \subset[\ell]} (-1)^{|I|} \bar{X}[I,\ell] \otimes \bar{Y}[I,\ell]
\]
in $H^*(G) \otimes H^*(G)$. 
Using Remark \ref{rem:basis} to check that the set $\set{\bar{X}[I,\ell] \mid I \subset [\ell]}$ is linearly independent in $H^\ell(G)$, we conclude that the product $\prod_{i=1}^\ell\hat{x}_i \hat{y_i}$ is non-zero in $H^*(G) \otimes H^*(G)$.  
So we have 
\[
2\ell \le \zcl(H^*(G)) < \tc(G) \le 2\dim(G)+1 = 2\ell+1.
\]  
Thus, $\tc(G)=2\ell+1$ and $\tc(G\times \Z^m)=2\ell+m+1$.
\end{proof}

\begin{corollary} \label{cor:zcl}
Let $G = F_{n_\ell} \rtimes \dots \rtimes F_{n_1}$ be an almost-direct product of free groups.  
If $n_j \ge 2$ for each $j$ and $m$ is a non-negative integer, 
then $\zcl(H^*(G \times \mathbb Z^m))=2\ell+m$.  
\end{corollary}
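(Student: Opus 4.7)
The plan is to extract this equality directly from the chain of estimates already assembled in the proof of Theorem~\ref{thm:tc}. That argument establishes the lower bound $\zcl(H^*(G\times\Z^m)) \ge 2\ell + m$ explicitly; combining this with the general zero-divisor bound $\tc(X) > \zcl(H^*(X))$ from \eqref{eq:zcl bound} and the value $\tc(G\times\Z^m) = 2\ell+m+1$ just computed will pin $\zcl$ to exactly $2\ell+m$.

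More concretely, first I would recall from the proof of Theorem~\ref{thm:tc} that the product $\prod_{i=1}^\ell \hat{x}_i \hat{y}_i$ is nonzero in $H^*(G)\otimes H^*(G)$, as verified via the Gr\"obner basis for $\sJ$ furnished by Lemma~\ref{lem:g-basis} together with the basis of $\sA=\sE/\sJ$ in Remark~\ref{rem:basis}. Similarly, \eqref{eq:prod1} shows $\prod_{j=1}^m \hat{z}_j$ is nonzero in $H^*(\Z^m)\otimes H^*(\Z^m)$. Under the K\"unneth identification $H^*(G\times\Z^m)^{\otimes 2}\cong H^*(G)^{\otimes 2}\otimes H^*(\Z^m)^{\otimes 2}$, the $\hat{x}_i$ and $\hat{y}_i$ live in the first two tensor slots and the $\hat{z}_j$ in the last two, so the full product of $2\ell+m$ zero divisors factors as the external tensor of the two separately nonzero classes, and is therefore nonzero. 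This yields the lower bound $\zcl(H^*(G\times\Z^m)) \ge 2\ell+m$.

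For the matching upper bound, I would apply \eqref{eq:zcl bound} to an Eilenberg--Mac\,Lane space for $G\times\Z^m$, obtaining
\[
\zcl(H^*(G\times\Z^m)) < \tc(G\times\Z^m) = 2\ell+m+1,
\]
where the equality is Theorem~\ref{thm:tc}. Hence $\zcl(H^*(G\times\Z^m)) \le 2\ell+m$, completing the proof. There is no real obstacle beyond the work already done for Theorem~\ref{thm:tc}: the only point to flag is the K\"unneth bookkeeping showing that the product of the two nonzero classes is nonzero in the combined tensor product, which is immediate since a tensor of nonzero homogeneous elements in a tensor product of graded algebras is nonzero.
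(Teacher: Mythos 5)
Your proposal is correct and matches the paper's (implicit) argument: the corollary follows immediately from the proof of Theorem \ref{thm:tc}, where the lower bound $\zcl(H^*(G\times\Z^m))\ge 2\ell+m$ is established via the nonvanishing product of zero divisors (together with $\zcl(H^*(\Z^m))=m$), and the upper bound comes from \eqref{eq:zcl bound} combined with $\tc(G\times\Z^m)=2\ell+m+1$. Your K\"unneth/tensor bookkeeping is just the subadditivity $\zcl(H^*(G)\otimes H^*(\Z^m))\ge \zcl(H^*(G))+\zcl(H^*(\Z^m))$ that the paper also invokes, so no further comment is needed.
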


\begin{example} \label{ex:braid5}
Let $G=P_\ell$ be the pure braid group, with center $Z(P_\ell)$.  It is well known that $Z(P_\ell)=\Z$ is infinite cyclic, that $\bar{P}_\ell=P_\ell/Z(P_\ell) = \rtimes_{i=2}^{n-1} F_i$ 
is an almost-direct product of free groups, and that $P_\ell \cong \bar{P}_\ell \times \Z$.  Theorem \ref{thm:tc} yields $\tc(\bar{P_\ell})=2\ell-3$ and $\tc(P_\ell)=2\ell-2$, the latter recovering the calculation of the topological complexity of the configuration space of $\ell$ ordered points in $\C$ due to Farber and Yuzvinsky \cite{FY}.

More generally, let $P_{\ell,k}=\ker(P_{k+\ell} \to P_k)$ be the kernel of the homomorphism that forgets the last $\ell\ge 1$ strands of a pure braid.  This group may be realized as the fundamental group of the configuration space $F(\C_k,\ell)$ of $\ell$ ordered points in $\C_k=\C\setminus\set{k\ \text{points}}$, and is an almost-direct product of free groups, $P_{\ell,k}=\rtimes_{i=k}^{k+\ell-1} F_i$.  Since $F(\C_k,\ell)$ is a $K(P_{\ell,k},1)$-space, Theorem \ref{thm:tc} implies that $\tc(F(\C_k,\ell))=\tc(P_{\ell,k})=2\ell+1$ if $k \ge 2$, 
as first shown by Farber, Grant, and Yuzvinsky \cite{fgy}.
\end{example}

The pure braid group and the group $P_{\ell,k}$ may be realized as fundamental groups of complements of fiber-type hyperplane arrangements.    
For an arbitrary fiber-type arrangement $\A$ in $\C^\ell$, the complement $M=\C^\ell\setminus \bigcup_{H\in\A}H$ is a $K(G,1)$-space, and the fundamental group $G=\pi_1(M)=\rtimes_{i=1}^\ell F_{n_i}$ is an almost-direct product of free groups.  Call the integers $n_1,\dots,n_\ell$ the exponents of $\A$.

\begin{corollary}
 Let $G$ be the fundamental group of the complement of a fiber-type hyperplane arrangement $\A$.  If the exponents of $\A$ are all at least $2$, then \[\tc(G\times \Z^m)=2\dim(G)+m+1.                                                                                                          \]\end{corollary}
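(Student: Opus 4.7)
The plan is to observe that this corollary is essentially a direct translation of Theorem \ref{thm:tc} into the language of hyperplane arrangements, so the proof reduces to identifying the structural data of a fiber-type arrangement with the hypotheses of that theorem.

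First I would recall, as stated in the paragraph preceding the corollary, that for a fiber-type arrangement $\A$ in $\C^\ell$ the complement $M$ is a $K(G,1)$-space and $G = \pi_1(M) = \rtimes_{i=1}^\ell F_{n_i}$ is an almost-direct product of free groups whose block sizes $n_1,\dots,n_\ell$ are precisely the exponents of $\A$. Then, since $G$ is an iterated semidirect product of $\ell$ finitely generated free groups, the discussion following equation \eqref{eq:HS} tells us that $G$ has cohomological dimension $\ell$, that is, $\dim(G) = \ell$.

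Next, because the exponents of $\A$ are assumed to be at least $2$, the hypothesis $n_j \ge 2$ for every $j$ of Theorem \ref{thm:tc} is satisfied. Applying that theorem directly gives
\[
\tc(G \times \Z^m) = 2\ell + m + 1 = 2\dim(G) + m + 1,
\]
which is the asserted equality.

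There is no real obstacle here: everything needed has been established, and the only content of the corollary is recognizing that the exponents of a fiber-type arrangement serve as the ranks of the free factors in the associated almost-direct product decomposition of its fundamental group, and that the cohomological dimension of this group equals the number of factors.
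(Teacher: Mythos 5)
Your proposal is correct and matches the paper's (implicit) argument exactly: the corollary is just Theorem \ref{thm:tc} applied to the almost-direct product $G=\rtimes_{i=1}^\ell F_{n_i}$ given by the fiber-type structure, with the exponents $n_i\ge 2$ supplying the hypothesis and $\dim(G)=\ell$ giving the stated form of the answer. Nothing further is needed.
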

This may also be obtained using results of Farber and Yuzvinsky \cite{FY}.

We conclude with a final example.

\begin{example}
 The basis-conjugating automorphism group $\PS_n$ of the free group $F_n$ is the subgroup of $\IA_n<\Aut(F_n)$ generated by the automorphisms $\beta_{i,j}$ recorded in \eqref{eq:IAgens}.  The subgroup of $\PS_n$ generated by the automorphisms $\beta_{i,j}$ with $1\le i<j \le n$ is known as the upper-triangular McCool group, and is an almost-direct product of free groups, see \cite{cpvw}. If  $x_{i,p}=\beta_{n-i,n-p+1}$, then $\PS_n^+=\rtimes_{i=1}^{n-1} F_i$, where $F_i=\langle x_{i,1},\dots,x_{i,i}\rangle$.
The presentation of $\PS_n^+$ provided by Proposition \ref{prop:presentation} has relations
\[
 x_{j,q}x_{i,p}=\begin{cases}
                 x_{i,p}^{}x_{j,q}^{}[x_{j,q}^{-1},x_{j,p}^{}]&\text{if $q=i+1$,}\\
                 x_{i,p}^{}x_{j,q}^{}&\text{otherwise,}\\
                \end{cases}
\]
where $1 \le i<j\le n-1$, compare \cite{cp,cpvw}.

Theorem \ref{thm:cohomology} reveals that $H^*(\PS_n^+) \cong \sE/\sJ$, where $\sE$ is the exterior algebra generated by $e_{i,p}$, $1\le p \le i \le n-1$, and $\sJ$ is the ideal generated by $e_{i,p}e_{j,i+1}-e_{j,p}e_{j,i+1}$, $1\le p \le i < j \le n-1$. 
It is readily checked that this differs from the description of $H^*(\PS_n^+)$ given in \cite{cpvw} only by a change in indexing. 
By Theorem \ref{thm:koszul},  $H^*(\PS_n^+)$ is Koszul.  This was first established in \cite{cp} by other means.

In \cite[Prop. 2.3]{cp}, it is shown that the center $Z(\PS_n^+)$ of $\PS_n^+$ is infinite cyclic, that 
$\overline{\PS}_n^+ = \PS_n^+/Z(\PS_n^+) = \rtimes_{i=2}^{n-1} F_i$ is an almost-direct product of free groups, and that $\PS_n^+ \cong \overline{\PS}_n^+ \times \Z$. 
Theorem \ref{thm:tc} yields $\tc(\overline{\PS}_n^+)=2\ell-3$ and $\tc(\PS_n^+)=2\ell-2$, as first shown in \cite{cp}.
\end{example}

\newcommand{\arxiv}[1]{{\texttt{\href{http://arxiv.org/abs/#1}{{arXiv:#1}}}}}

\newcommand{\MRh}[1]{\href{http://www.ams.org/mathscinet-getitem?mr=#1}{MR#1}}

\end{document}